\newtheorem{theorem}{Theorem}[section]
\newtheorem{lemma}[theorem]{Lemma}
\newtheorem{corollary}[theorem]{Corollary}
\newtheorem{assumption}{Assumption}
\DeclareSymbolFont{AMSb}{U}{msb}{m}{n}
\DeclareMathSymbol{\N}{\mathbin}{AMSb}{"4E}
\DeclareMathSymbol{\Z}{\mathbin}{AMSb}{"5A}
\DeclareMathSymbol{\R}{\mathbin}{AMSb}{"52}
\DeclareMathSymbol{\Q}{\mathbin}{AMSb}{"51}
\DeclareMathSymbol{\I}{\mathbin}{AMSb}{"49}
\DeclareMathSymbol{\C}{\mathbin}{AMSb}{"43}
\title{Approximate Likelihood Construction for Rough DIfferential Equations}
\author{Anastasia Papavasiliou and Kasia B. Taylor}
\address{Anastasia Papavasiliou \\ Department of Statistics \\ University of Warwick \\ Coventry \\ CV4 7AL \\UK}
\address{Kasia B. Taylor \\ National Grid \\ Warwick Technology Park \\ Gallows Hill \\ Warwick \\ CV34 6DA \\ UK}
\date{\today}                                           
\thanks{Supported by the Leverhulme RPG-2013-270: ``Statistical Inference of Complex Systems through Rough Paths''}
\begin{document}
\maketitle

\begin{abstract}
In this paper, we propose a new framework for the construction of the likelihood of discretely observed differential equations driven by rough paths. The paper is split in two parts: in the first part, we construct the exact likelihood for a discretely observed rough differential equation, driven by a piecewise linear path. In the second part, we use this likelihood to construct approximate likelihoods for discretely observed differential equations driven by a general class of rough paths. Finally, we study the behaviour of the approximate likelihoods when the sampling frequency tends to infinity.
\end{abstract}


\section{Introduction}

\par{Rough differential equations were introduced in the mid-1990s by Terry Lyons (see  \cite{lyons1994differential}, \cite{lyons1995interpretation}
 and \cite{lyons1998differential}).
The theoretical foundations are now
  well established  and have evolved into a self-contained branch of theoretical pursuits,  inspiring the inception of new important developments such as regularity structures (see \cite{hairer2014theory}). 
The theory of rough paths allows one to interpret solutions of  differential equations of the type 
\[ dY_{t}= f(Y_{t})dX_{t}, \]
where $X$  is only of a.s. finite $p$-variation for some $p> 1$, which includes
drivers $X$ that exhibits  rougher paths than Brownian motion. Such equations are called differential equations driven by rough paths (RDEs). The obtained solutions are pathwise. For more information about rough paths theory see e.g.  \cite{lyons2002system}, \cite{friz2010multidimensional}, \cite{friz2014course}. Moreover,  the rough paths setting allows us to encode information about processes in an efficient way (see \cite{levin2013learning} and references therein).}
\par{Fractional  Brownian motion (fBm) $B^{h}$ with Hurst parameter $h$ $ \in (0,1)$ is the most popular non trivial process with a.s. finite $p$-variation for any $p>\frac{1}{h}$.
It was introduced in \cite{kolmogorov1940wienersche}
and popularised by  \cite{mandelbrot1968fractional}. For $h=\tfrac{1}{2}$  it coincides with Brownian motion. Fractional Brownian motion is a  Gaussian self-similar  centred process with  stationary increments and covariance structure 
\[ E(B^{h}_{t}B^{h}_{s})=\tfrac{1}{2} (|t|^{2h}+|s|^{2h}-|t-s|^{2h}).\]
The span of interdependence between  the increments of fBm is infinite, in other words the increments are not strongly mixing (with the exception of $h=\tfrac{1}{2})$. An $n$-dimensional fBM is defined as an ordered set of $n$ independent  scalar fBMs; discussion on multi-dimensional fBMs can be found in e.g. \cite{unterberger2010rough}.}

\par{A classical question for SDEs driven by either Brownian motion and fractional Brownian motion is how to infer unknown parameters in the vector field from discrete observations. 
Recent  surveys  of  methods for inference for SDEs driven by Brownian motion can be found in e.g. \cite{kutoyants2013statistical}, \cite{fuchs2013inference}, \cite{iacus2009simulation}, \cite{bishwal2008parameter}. Reviews of parameter estimation methods for SDEs driven by fractional Brownian can be found in e.g.
\cite{rao2011statistical}, \cite{mishura2008stochastic}. Recent papers on the topic include \cite{beskos2015bayesian}, \cite{saussereau2014nonparametric},  \cite{neuenkirch2014least},  \cite{chronopoulou2013inference},   \cite{brouste2013parameter}, \cite{hu2010parameter},  \cite{tudor2007statistical}. This list is by no means exhaustive.}

\par{ 
As a natural consequence of the introduction of RDEs, a new class of parametric models arises, which includes SDEs driven by either Brownian or fractional Brownian motion. Note that Brownian motion and fractional Brownian motion are considered to be stochastic processes in the context of SDEs while they need to be lifted to rough paths in the RDE context. In this paper, we make a first step towards statistical inference for discretely observed RDEs, by developing a framework for the construction of an approximate likelihood of discretely observed RDEs.  
The discussion of statistical inference methods for RDEs was initiated in  \cite{papavasiliou2011parameter}. The presented  estimation method is based on matching the empirical expected signature  with the theoretical one, where the signature is defined as the set of all iterated integrals of a path. 
  Bayesian inference is conducted in the article \cite{lysy2013statistical} where rough paths approach is adopted and data augmentation technique and Hybrid Monte Carlo are employed.
Stochastic filtering and MLEs in rough paths setting are investigated  in \cite{diehl2014robustness} and related papers \cite{crisan2013robust},  \cite{diehl2013levy}, \cite{diehl2013pathwise}.}



\section{Setting and Main Ideas}

In the first part of the paper, we consider the following type of differential equations
\begin{equation}
\label{eq:main}
dY^{\mathcal D}_t = a(Y^{\mathcal D}_t ; \theta) dt + b(Y^{\mathcal D}_t ; \theta) dX^{\mathcal D}_t,\ \ Y_0 = y_0,\ t\leq T,
\end{equation}
where $X^{\mathcal D}$ is a realisation of a random piecewise linear path in $\R^m$  corresponding to partition ${\mathcal D}$ of $[0,T]$. We also assume that $\theta\in\Theta$, where $\Theta$ is the parameter space. Moreover, we request that $a(\cdot,\theta):\R^d\to \R^d$ and $b(\cdot,\theta):\R^d \to L(\R^m, \R^d)$ are {\rm Lip(1)}, which are sufficient conditions for the existence and uniqueness of the solution $Y^{\mathcal D}$, which is a bounded variation path on $\R^d$.

We will use $I_\theta$ to denote the It\^o map defined by \eqref{eq:main}. That is, $I_\theta$ maps the path $X^{\mathcal D}$ to the path $Y^{\mathcal D}$ and we write
\[ Y^{\mathcal D} = I_\theta (X^{\mathcal D}).\]

First, we develop a framework for performing statistical inference for differential equation \eqref{eq:main}, assuming that we know the distribution of $X^{\mathcal D}$. More precisely, we will aim to construct the likelihood of discrete observations of $Y^{\mathcal D}$ on the grid ${\mathcal D}$, which we will denote by $y_{\mathcal D}$.  The main idea is to use the observations to explicitly construct the It\^o map that maps a finite parametrization of $Y^{\mathcal D}$ to a finite parametrization of $X^{\mathcal D}$. Typically, $Y^{\mathcal D}$ will be parametrized by the observations $y_{{\mathcal D}} := \{ y_{t_i};\ t_i\in {\mathcal D}\}$ and $X^{\mathcal D}$ will be parametrized by the corresponding normalised increments $(\Delta x)_{{\mathcal D}}:=\{ \frac{x_{t_{i+1}} - x_{t_i}}{t_{i+1}-t_i};\ t_i, t_{i+1} \in {{\mathcal D}} \}$. 

In section \ref{section: e-u}, we study the existence and uniqueness of the pair $(X^{\mathcal D}, Y^{\mathcal D})$ for $Y^{\mathcal D}$ parametrised by the given dataset $y_{\mathcal D} = \{y_{t_i};\ t_i \in {\mathcal D}\}$. We give conditions for existence, which are necessary for the methodology to work. Then, we show that for $a$ and $b$ in {\rm Lip}(2) and $b$ non singular, the solution will be unique for the case $m=d$ and it will have $m-d$ degrees of freedom for the case $m>d$. Since existence will not in generally be true for the case $d>m$, this case will not be considered.


In section \ref{section: likelihood}, we explicitly construct the likelihood, treating separately the cases where we have uniqueness and where we have one or more degrees of freedom. 

In the second part of the paper, we consider equation
\begin{equation}
\label{eq:main2}
dY_t = a(Y_t ; \theta) dt + b(Y_t ; \theta) dX_t,\ \ Y_0 = y_0,\ t \leq T,
\end{equation}
where $X\in G\Omega_p(\R^m)$ is the realisation of a random geometric $p$-rough path, defined as the $p$-variation limit of a random sequence of nested piecewise linear paths. Let us denote by ${\mathcal D}(n,T)$ the sequence of nested partitions of $[0,T]$ and by $\pi_n(X)$ the corresponding sequence of piecewise linear paths, such that $d_p(\pi_n(X),X)\to 0$ as $n\to\infty$. We now assume that for each $\theta\in\Theta$, $a(\cdot,\theta)$ and $b(\cdot,\theta)$ are ${\rm Lip(\gamma+1)}$, for some $\gamma>p$, which are sufficient conditions for the existence and uniqueness of the solution $Y=I_\theta(X)\in G\Omega_p(\R^d)$. Moreover, as before, for $b$ non-singular, the pair $(X,Y)$ is unique. If we denote by $Y(n)$ the response to the piecewise linear path $\pi_n(X)$, i.e. $Y(n) = I_\theta\left(\pi_n(X)\right)$, then the continuity of the It\^o map in the $p$-variation topology implies that $d_p(Y(n),Y)\to 0$ as $n\to\infty$.

To simplify notation, we will assume that the partitions ${\mathcal D}(n,T)$ are the dyadic partitions of $[0,T]$, i.e. they are homogeneous with interval size $\delta = 2^{-n}$. We write ${\mathcal D}(n) = \{ k 2^{-n};\ k=0,\dots,N\}$, where $N = 2^n T$.

In section \ref{section: limit}, we use the likelihood constructed before to construct an approximate likelihood of observing a realisation of \eqref{eq:main2} on grid ${\mathcal D}(n)$ for some fixed $n$ -- denoted by $y_{{\mathcal D}(n)}$.  The main idea behind the construction is to replace the model \eqref{eq:main2} that produces the data by \eqref{eq:main}, which is tractable and converges to \eqref{eq:main2} for $n\to\infty$. However, one also needs to normalise the likelihood appropriately, so that the limit still depends on the parameter that we want to estimate.

In section \ref{section: example}, we demonstrate how the method works: we construct the likelihood in the simple case of a discretely observed Ornstein-Uhlenbeck model driven by a picewise linear approximation to fractional Brownian motion and we compute the corresponding limiting likelihoods for $h=\frac{1}{2}$.

Finally, in section \ref{section: convergence}, we make precise in what sense the likelihood constructed in the previous section is approximate. Replacing the complicated model by a simpler one approximating the actual model, when we can construct the likelihood corresponding to the simpler model exactly, is not an uncommon approach for performing statistical inference for otherwise intractable models. For example, this is done in \cite{papavasiliou2009maximum} where the authors replace the actual multiscale model by its limiting diffusion and use that to construct the likelihood. They show that the approximation error due to the mismatch between data (coming from the multiscale model) and model (the limiting equation) disappears in the limit. Following a similar approach, we show that, under suitable conditions, an appropriate distance between the likelihood for discrete observations on a grid ${\mathcal D}(n)$ of the corresponding process $Y(n)$ and of the limiting process $Y$ respectively disappears, as $n\to\infty$. 





\section{Existence and Uniqueness}
\label{section: e-u}

We are given a set of points $y_{\mathcal D}$ in $\R^d$, where ${\mathcal D}$ is the fixed partition of $[0,T]$. In this section, we study the existence and uniqueness of piecewise linear path $X^{\mathcal D}$, whose response $Y^{\mathcal D} = I_\theta(X^{\mathcal D})$ for given $\theta\in\Theta$ goes through points $y_{\mathcal D}$, i.e. $Y^{\mathcal D}_{t_i} = y_{t_i}$ for each $t_i \in {\mathcal D}$. 

First, we discuss how to express $Y^{\mathcal D}$ in terms of $X^{\mathcal D}$. By construction, $X^{\mathcal D}$ is linear between grid points, i.e.
\[ X^{\mathcal D}_t = X^{\mathcal D}_{t_i} + \Delta X^{\mathcal D}_{t_i} \left( t-t_i \right),\ \forall t \in [t_i,t_{i+1}),\ t_i,t_{i+1}\in{\mathcal D},\]
where $\Delta X^{\mathcal D}_{t_i} = \frac{X^{\mathcal D}_{t_{i+1}}-X^{\mathcal D}_{t_i}}{t_{i+1}-t_i}$.
By definition, $Y^{\mathcal D} = I_\theta(X^{\mathcal D})$ which implies that for every $ t \in [t_i,t_{i+1})$, $Y^{\mathcal D}_t$ satisfies
\begin{eqnarray*}
dY^{\mathcal D}_t &=& a(Y^{\mathcal D}_t ; \theta) dt + b(Y^{\mathcal D}_t ; \theta) dX^{\mathcal D}_t = \\
&=& \left( a(Y^{\mathcal D}_t ; \theta) dt + b(Y^{\mathcal D}_t ; \theta)  \Delta X^{\mathcal D}_{t_i} \right) dt
\end{eqnarray*}
with initial conditions $Y^{\mathcal D}_{t_i} = y_{t_i}$. This is an ODE and we have already assumed sufficient regularity on $a$ and $b$ for existence and uniqueness of its solutions. The general form of the ODE is given by
\begin{equation}
\label{general ODE} 
d\tilde{Y}_t = \left(  a(\tilde{Y}_t;\theta) + b(\tilde{Y}_t;\theta)\cdot c \right) dt, Y_{0} = y_0
\end{equation}
and we will denote its solution by $F_t(y_0,c;\theta)$. Then, 
\begin{equation}
\label{y(n) general formula} 
Y^{\mathcal D}_t = F_{t-t_i}(y_{t_i}, \Delta X_{t_i};\theta),\ \forall t\in[t_i,t_{i+1}).
\end{equation}
In order to fit $Y^{\mathcal D}$ to the observed data $y_{{\mathcal D}}$, we need to solve for $\Delta X_{t_i}$, using the terminal value, i.e. solve
\begin{equation}
\label{solving for c}
F_{t_{i+1}-t_i}(y_{t_i},\Delta X_{t_i};\theta) = y_{t_{i+1}}
\end{equation}
for $\Delta X_{t_i}(y_{t_i},y_{t_{i+1}};\theta)$. So, for every interval $[t_i,t_{i+1})$, we need to solve an independent system of $d$ equations and $m$ unknowns. That is, we need to study the existence and uniqueness of solutions with respect to $c$ of the system
\begin{equation}
\label{system} 
F_\delta(y_0, c; \theta ) = y_1 ,
\end{equation}
for every $\theta$ and for appropriate values of $\delta, y_0$ and $y_1$. We are going to assume existence of solution, by requiring that $y_1 \in \cap_{\theta\in\Theta}{\mathcal M}_\delta\left( y_0;\theta \right)$, where
\begin{equation}
\label{solution manifold} 
{\mathcal M}_\delta\left( y_0;\theta \right) = \left \{ F_\delta(y_0,c;\theta); c\in \R^m \right\}.
\end{equation}
Now, suppose that $c_1$ and $c_2$ are both solutions for a given $\theta\in\Theta$, i.e.
\[ F_\delta(y_0,c_1;\theta) = y_1 = F_\delta(y_0,c_2;\theta).\]
We can write the difference as
\begin{eqnarray*}
F_\delta(y_0,c_2;\theta) - F_\delta(y_0,c_1;\theta) = \left( \int_0^1 D_c F_\delta(y_0,c_1 + s(c_2-c_1);\theta) ds \right) \cdot (c_2-c_1).
\end{eqnarray*}
Thus, $F_\delta(y_0, c_1; \theta ) = F_\delta(y_0, c_2; \theta )$ implies
\[  \left( \int_0^1 D_c F_\delta(y_0,c_1 + s(c_2-c_1);\theta) ds \right) \cdot (c_2-c_1) = 0.\]
So, it is sufficient to show that $\forall \xi\in\R^m$, the rank of $d\times m$ matrix $D_c F_\delta(y_0,\xi;\theta)$ is $d$, which implies that the solution will have $m-d$ degrees of freedom, i.e. given $m-d$ coordinates of $c$, the other coordinates are uniquely defined. In particular, for $d=m$ we get uniqueness.

Since the vector field of \eqref{general ODE} is linear with respect to $c$, we know that $F_t(y_0,c;\theta)$ will be continuously differentiable with respect to $c$ for every $y_0$, $\theta$ and $t$ in the appropriate bounded interval \cite{kelley2001difference}. 
Thus, we define a new auxiliary process as $Z_t(c) = D_c F_t(y_0,c;\theta) \in \R^{d\times m}$, or, 
\begin{equation}
\label{Z definition}
Z^{i,\alpha}_t(c) = \frac{\partial}{\partial c_\alpha} F^i_t(y_0,c; \theta),\ {\rm for}\ i = 1,\dots,d,\ \alpha = 1,\dots,m.
\end{equation}
Then, assuming one additional degree of regularity, $Z_t(c)$ satisfies
\begin{eqnarray*}
\frac{d}{dt} Z^{i,\alpha}_t(c) &=& \frac{d}{dt} \frac{\partial}{\partial c_\alpha} F^i_t(y_0,c; \theta) = \frac{\partial}{\partial c_\alpha} \frac{d}{dt} F^i_t(y_0,c; \theta) = \\
&=& \frac{\partial}{\partial c_\alpha}\left( a_i(F_t(y_0,c; \theta) ) + \sum_{\beta=1}^m c_\beta b_{i\beta}(F_t(y_0,c; \theta) )\right) = \\
&=& \sum_{j=1}^d\left( \partial_j a_i(F_t(y_0,c; \theta) ) + \sum_{\beta=1}^m c_\beta  \partial_j b_{i\beta}(F_t(y_0,c; \theta) ) \right)  {\bar Z}^{j\alpha}_t(c) + {b}_{i\alpha}(F_t(y_0,c; \theta) ),
\end{eqnarray*}
where by $\bar{Z}_t^\alpha(c)$ we denote column $\alpha\in\{1,\dots,m\}$ of matrix $Z_t(c)$. More concisely, we write
\begin{equation}
\label{Z equation} 
\frac{d}{dt} \bar{Z}_t^\alpha(c)  = \bigtriangledown\left(  a+ b\cdot c \right)(F_t;\theta)\cdot \bar{Z}_t^\alpha(c) + \bar{b}_\alpha(F_t;\theta),
\end{equation}
where $\bigtriangledown f $ of a function $f:\R^d\to\R^d$ we denote the $d\times d$ matrix defined as
\begin{equation*}
\left( \bigtriangledown f (y)\right)_{i,j} = \partial_j f_i(y).
\end{equation*}
Also, $\bar{b}_\alpha$ is column $\alpha$ of matrix $b$. Note that, for each fixed $\alpha$, this is a linear equation of $\bar{Z}^\alpha(c)$ with non-homogeneous coefficients. Also note that the initial conditions will be
\[ Z^{i,\alpha}_0(c) = \frac{\partial}{\partial c_\alpha} F^i_0(y_0,c; \theta) = \frac{\partial}{\partial c_\alpha} y_0 \equiv 0,\ \forall i= 1,\dots,d,\ \alpha= 1,\dots,m.\]
Thus, the solution to this equation will be
\begin{equation}
\label{Z solution}
\bar{Z}^\alpha_t(c) = \int_0^t \exp\left( {\bf A}\right)_{s,t} \bar{b}_\alpha(F_s;\theta) ds,
\end{equation}
where by $\exp\left( A\right)_{s,t}$ we denote the sum of iterated integrals
\begin{equation*}
\exp\left( {\bf A}\right)_{s,t} = \sum_{k=0}^\infty {\bf A}^k_{s,t}
\end{equation*}
and
\[ {\bf A}^k_{s,t} = \int\cdots\int_{s<u_1<\dots<u_k<t} A(F_{u_1};\theta)\cdots A(F_{u_k};\theta) du_1 \dots du_k
\]
for 
\begin{equation}
\label{def: A}
A(y;\theta) = \bigtriangledown\left(  a+  b\cdot c \right)(y;\theta)
\end{equation}
This is a $d\times d$ matrix and for $k=0$ we get the identity matrix, i.e. ${\bf A}^0_{s,t} = I_d$. Since each vector $\bar{Z}^\alpha_\delta$ is a column of the matrix $D_c F_\delta(y_0,c;\theta)$, the condition that the rank of this matrix is $d$ is equivalent to $d$ columns being linearly independent. Without loss of generality, let's consider the first $d$ columns ($d\leq m$) and let us assume that 
\begin{equation}
\label{linear combination} 
\lambda_1 \bar{Z}^1_\delta + \cdots + \lambda_d \bar{Z}^d_\delta = \bar{0},
\end{equation}
for some $\lambda_1,\dots,\lambda_d \in \R$. We need to find conditions such that \eqref{linear combination} is equivalent to $\lambda_1 = \cdots = \lambda_d = 0$. Using \eqref{Z solution} we get that \eqref{linear combination} is equivalent to
\begin{eqnarray*}
\int_0^\delta \exp\left( {\bf A}\right)_{s,\delta} \left( \lambda_1 \bar{b}_1(F_s;\theta) + \cdots + \lambda_d \bar{b}_d(F_s;\theta)\right) ds = \bar{0}.
\end{eqnarray*}
Using the continuity of the integrated function with respect to $s$, 
we can deduce that there exists a $\delta^\prime\in[0,\delta]$, such that we can write the above relationship as
\begin{eqnarray*}
\exp\left( {\bf A}\right)_{\delta^\prime,\delta} \left( \lambda_1 \bar{b}_1(F_{\delta^\prime};\theta) + \cdots + \lambda_d \bar{b}_d(F_{\delta^\prime};\theta)\right) \cdot \delta = \bar{0}.
\end{eqnarray*}
It is known that $\exp\left( {\bf A}\right)_{\delta^\prime,\delta}$ is invertible, with inverse equal to $\exp\left( {\bf A}\right)_{\delta,\delta^\prime}$. Consequently, the above relationship can only be true if
\[ \lambda_1 \bar{b}_1(F_{\delta^\prime};\theta) + \cdots + \lambda_d \bar{b}_d(F_{\delta^\prime};\theta) = \bar{0}. \]
Assuming that the rank of $d\times m$ matrix $b(y;\theta)$ is $d$ for every $y$, this implies that $\lambda_1 = \cdots = \lambda_d = 0$, which is what we required. 

We have shown the following results:
\begin{lemma}
\label{rank Z}
Suppose that ${\rm rank}\left(b(y,\theta)\right) = d$ for every $y$ and that $a(\cdot,\theta)$ and $b(\cdot,\theta)$ are ${\rm Lip}(2)$. Then
\[ {\rm rank}\left(Z_t(c)\right) = {\rm rank}\left( D_c F_t(y_0,c;\theta) \right) = d. \]
\end{lemma}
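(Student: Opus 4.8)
The plan is to reduce the rank statement to a linear-independence statement about the columns of the Jacobian and then exploit the explicit variation-of-constants representation \eqref{Z solution} together with the full-rank hypothesis on $b$. Since $Z_t(c) = D_c F_t(y_0,c;\theta)$ is a $d\times m$ matrix with $d\leq m$, its rank equals $d$ precisely when some collection of $d$ of its columns $\bar{Z}^1_t,\dots,\bar{Z}^m_t$ is linearly independent. So it suffices to fix $d$ columns — say the first $d$, relabelling if necessary — and to show that $\lambda_1\bar{Z}^1_t + \cdots + \lambda_d\bar{Z}^d_t = \bar 0$ forces $\lambda_1=\cdots=\lambda_d=0$.

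First I would justify the representation \eqref{Z solution}. Differentiating the ODE \eqref{general ODE} in $c_\alpha$ and interchanging $\partial_{c_\alpha}$ with $d/dt$ — legitimate because the extra degree of regularity, $a,b\in{\rm Lip}(2)$, makes $F_t$ twice continuously differentiable in $c$ — shows that each column $\bar{Z}^\alpha_t$ solves the linear inhomogeneous equation \eqref{Z equation} with initial datum $\bar{Z}^\alpha_0 = \bar 0$. The matrix resolvent $\exp(\mathbf{A})_{s,t}$ of the homogeneous part then yields \eqref{Z solution}, and the key structural fact I would record is that $\exp(\mathbf{A})_{s,t}$ is invertible for all $s,t$, with inverse $\exp(\mathbf{A})_{t,s}$.

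Substituting \eqref{Z solution} into the assumed relation \eqref{linear combination} turns it into
\begin{equation*}
\int_0^t \exp(\mathbf{A})_{s,t}\,\Big( \lambda_1\bar{b}_1(F_s;\theta) + \cdots + \lambda_d\bar{b}_d(F_s;\theta) \Big)\, ds = \bar 0.
\end{equation*}
The crux — and the step I expect to be the main obstacle — is to pass from this integral identity to a pointwise statement about $b$. The integrand is continuous in $s$, so I would try to extract a point $\delta'\in[0,t]$ at which the invertible factor $\exp(\mathbf{A})_{\delta',t}$ multiplies $\lambda_1\bar{b}_1(F_{\delta'};\theta)+\cdots+\lambda_d\bar{b}_d(F_{\delta'};\theta)$; invertibility then removes the resolvent and leaves $\lambda_1\bar{b}_1(F_{\delta'};\theta)+\cdots+\lambda_d\bar{b}_d(F_{\delta'};\theta)=\bar 0$. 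Finally the hypothesis ${\rm rank}(b(y;\theta))=d$ says the columns $\bar{b}_1,\dots,\bar{b}_d$ are linearly independent at every point, forcing $\lambda_1=\cdots=\lambda_d=0$ and closing the argument.

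I would flag that this mean-value extraction of $\delta'$ is delicate: for a vector-valued integrand a single point need not realise the average, so the vanishing of the integral does not by itself give a zero of the integrand. The clean way to secure the conclusion is to work at $t>0$ small, where $F_s\to y_0$ and $\exp(\mathbf{A})_{s,t}\to I_d$ give the expansion $\bar{Z}^\alpha_t = t\,\bar{b}_\alpha(y_0;\theta) + O(t^2)$, so that the leading terms inherit the linear independence of $\bar{b}_1(y_0;\theta),\dots,\bar{b}_d(y_0;\theta)$ directly; this is in any case the regime that matters, since the grid spacing $\delta=2^{-n}$ tends to $0$.
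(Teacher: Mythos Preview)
Your approach is exactly the paper's: derive the variation-of-constants formula \eqref{Z solution}, substitute into the linear relation \eqref{linear combination}, and try to strip off the invertible resolvent $\exp(\mathbf{A})_{s,t}$ to reduce to a linear dependence among columns of $b$. The paper carries out precisely the ``mean-value extraction'' you describe, asserting that continuity of the integrand yields a single $\delta'\in[0,\delta]$ at which $\exp(\mathbf{A})_{\delta',\delta}\bigl(\sum_\alpha\lambda_\alpha\bar b_\alpha(F_{\delta'};\theta)\bigr)\cdot\delta=\bar 0$, and then invokes invertibility of the resolvent.

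You are right to flag that step as delicate, and in fact your objection applies to the paper's own argument: the integral mean-value theorem fails for vector-valued integrands (e.g.\ $s\mapsto(\cos 2\pi s,\sin 2\pi s)$ on $[0,1]$ has zero integral but never vanishes), so the existence of a single $\delta'$ is not justified by continuity alone. Your small-$t$ expansion $\bar Z^\alpha_t = t\,\bar b_\alpha(y_0;\theta)+O(t^2)$ is a clean and correct repair in the regime $\delta\to 0$, which is the only regime in which the lemma is actually invoked (via \eqref{solving for c} and \eqref{Jacobian} on a fine grid). Just be aware that the lemma as stated carries no smallness assumption on $t$, so your fix proves a restricted version; the unrestricted statement would need a further argument, and the paper does not supply one either.
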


Note that the construction of the process $Z$ can also be done for $X\in G\Omega_p(\R^m)$, provided that its piecewise linear approximations converge in $p$-variation and that the vector field functions $a$ and $b$ are now ${\rm Lip}(\gamma +1)$. Uniqueness of the pair $(X,Y)$ for given $Y$ follows by taking limits. We make this statement formal in the following 
\begin{corollary}
Suppose that ${\rm rank}\left(b(y;\theta)\right) = d$ for every $y$ and that $a(\cdot;\theta)$ and $b(\cdot;\theta)$ are ${\rm Lip}(\gamma + 1)$. Then, for a given $Y$, the solution $(X,Y)$ of \eqref{eq:main2} is unique.
\end{corollary}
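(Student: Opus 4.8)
The plan is to reduce the statement to the uniqueness part of the universal limit theorem by interchanging the roles of driver and response. Since ${\rm rank}(b(y;\theta)) = d$ for every $y$ and genuine uniqueness (as opposed to $m-d$ degrees of freedom) is only possible when $m=d$, the matrix $b(y;\theta)$ is invertible for every $y$. I would therefore introduce the vector fields $\tilde a(y;\theta) = -b(y;\theta)^{-1} a(y;\theta)$ and $\tilde b(y;\theta) = b(y;\theta)^{-1}$ on $\R^d$ and consider the auxiliary RDE
\begin{equation*}
dX_t = \tilde a(Y_t;\theta)\, dt + \tilde b(Y_t;\theta)\, dY_t,\qquad X_0 = x_0,
\end{equation*}
which formally expresses $dX$ in terms of $dY$ by solving $dY = a\,dt + b\,dX$ for $dX$. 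Because matrix inversion is smooth on the set of invertible matrices, $\tilde a(\cdot;\theta)$ and $\tilde b(\cdot;\theta)$ inherit ${\rm Lip}(\gamma+1)$ regularity from $a$, $b$ and $b^{-1}$, so with $\gamma > p$ this inverse equation again has a unique solution in $G\Omega_p(\R^m)$ for a given driver $Y$ and fixed initial point $x_0$.

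The heart of the argument is to show that any $X$ with $I_\theta(X) = Y$ is in fact this unique solution, and here I would take limits exactly as indicated in the remark. For the dyadic piecewise linear approximations $\pi_n(X)$ with responses $Y(n) = I_\theta(\pi_n(X))$, each $Y(n)$ is a bounded variation path and, on every interval of ${\mathcal D}(n)$, invertibility of $b$ lets one solve $dY(n) = a(Y(n))\,dt + b(Y(n))\,d\pi_n(X)$ pointwise for $d\pi_n(X) = \tilde a(Y(n))\,dt + \tilde b(Y(n))\,dY(n)$. Thus $\pi_n(X)$ is precisely the solution of the inverse equation driven by $Y(n)$ with initial point $x_0$; this is the continuous-time, finite-$n$ incarnation of the local invertibility of $F_\delta$ established through Lemma \ref{rank Z}.

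Next I would pass to the limit. By continuity of the It\^o map in the $p$-variation topology we have $d_p(Y(n),Y)\to 0$, and by continuity of the solution map of the inverse equation (the universal limit theorem applied to $\tilde a,\tilde b$) the paths $\pi_n(X)$, viewed as solutions driven by $Y(n)$, converge in $p$-variation to the solution $\tilde X$ of the inverse equation driven by $Y$. But $d_p(\pi_n(X),X)\to 0$ holds by construction, so $X = \tilde X$; that is, $X$ solves the inverse RDE driven by $Y$. Since that equation has a unique solution for fixed $x_0$, any second driver $X'$ with $I_\theta(X') = Y$ and the same initial point must equal $X$, which is the asserted uniqueness of the pair $(X,Y)$.

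I expect the main obstacle to be the regularity and, crucially, the uniform invertibility of $b^{-1}$ needed to apply the universal limit theorem to the inverse equation: one must ensure that along the relevant solutions $b(Y_t;\theta)$ stays uniformly bounded away from singularity, so that $\tilde a,\tilde b$ are genuinely ${\rm Lip}(\gamma+1)$ with controlled norms rather than merely invertible pointwise. A secondary technical point is keeping track of the geometric rough path structure of $Y$ and of the joint lift $(X,Y)$, and noting that the initial value $x_0$ of the driver is part of the data, so that uniqueness of $(X,Y)$ for given $Y$ is understood modulo the fixed starting point of $X$. For $m > d$ the same limiting scheme applies but yields uniqueness only after prescribing $m-d$ coordinates of the driver, matching the degrees-of-freedom count of Lemma \ref{rank Z}.
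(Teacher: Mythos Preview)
Your proposal is correct and follows the route the paper itself only gestures at: the paper gives no detailed proof of the corollary, merely remarking that one can construct $Z$ for $X\in G\Omega_p(\R^m)$ and that ``uniqueness of the pair $(X,Y)$ for given $Y$ follows by taking limits.'' Your argument makes this precise by inverting the equation and passing to the limit through the piecewise linear approximations, and this is exactly the viewpoint the paper adopts later in the proof of Lemma~\ref{lemma: uniform limit}, where it states that ``under the assumption that $b$ is invertible, $I_\theta^{-1}$ can be expressed as an integral of the rough path.''

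One small terminological point: your ``auxiliary RDE'' $dX_t=\tilde a(Y_t)\,dt+\tilde b(Y_t)\,dY_t$ is not an RDE in the usual sense, since the coefficients depend on the driver $Y$ rather than on the unknown $X$; it is simply a rough integral of a one-form along $(t,Y)$. This does not affect the argument---existence, uniqueness and $p$-variation continuity of rough integrals are if anything easier than for genuine RDEs---but it means you do not need the full universal limit theorem, only continuity of rough integration. Your identification of the main obstacle (uniform invertibility of $b$ so that $\tilde a,\tilde b$ have controlled ${\rm Lip}(\gamma+1)$ norms) is apt; the paper addresses this later via Assumption~\ref{ass: model}(ii), which bounds $b$ away from zero.
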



\section{Construction of the Likelihood}
\label{section: likelihood}
In this section, we construct the exact likelihood of observing the process $Y^{\mathcal D}$ on a fixed grid ${\mathcal D}$, denoted by $y_{\mathcal D} = Y^{\mathcal D}_{\mathcal D}$, where $Y^{\mathcal D}$ is the response to a piecewise linear path $X^{\mathcal D}$ on ${\mathcal D}$ through \eqref{eq:main}. The key realisation is that the values of $Y^{\mathcal D}$ on ${\mathcal D}$ actually completely describe the process $Y^{\mathcal D}$.

First, we need to impose a probability structure to the space. Let $\left(\Omega,{\mathcal F}, {\mathbb P}\right)$ be a probability space and let $X^{\mathcal D}$ be a random variable, taking values in the space of piecewise linear paths on ${\mathcal D}$, equipped with the 1-variation topology. So, $X^{\mathcal D}$ is a random piecewise linear path on $\R^m$ corresponding to partition ${\mathcal D}$. Thus, it is fully described by the distribution of its values on the grid ${\mathcal D}$, or, equivalently, its increments. Let us denote that distribution by ${\mathbb P}_{\Delta X_{\mathcal D}}$.

The measure ${\mathbb P}_{\Delta X_{\mathcal D}}$ is a distribution on the finite dimensional space $\R^{m\times N}$, with $N=|{\mathcal D}|$ being the size of the partition. We will assume that this is absolutely continuous with respect to Lebesgue. 

By the continuity of $I_\theta$, $Y^{\mathcal D} = I_\theta (X^{\mathcal D})$ is also an implicitly finite dimensional random variable, whose distribution can be fully describe by the probability of its values on the grid. Below, we construct the likelihood of observing a realisation of $Y^{\mathcal D}$, corresponding to parametrisation $y_{\mathcal D}$. 

\subsection{Case I: Uniqueness}

 Let us first consider the case where we have existence and uniqueness of solutions to system \eqref{system}, so $m=d$. Then, for each dataset ${y}_{{\mathcal D}}$, the set $\{\Delta X_{t_i}(y_{t_i},y_{t_{i+1}};\theta), t_i\in{\mathcal D}\}$ will be uniquely defined as the collection of solutions of \eqref{system}. This defines a map
\begin{equation}
\label{inverse Ito projection}  
I_{\theta,{\mathcal D}}^{-1} (y_{{\mathcal D}}) = \{ \Delta X_{t_i}(y_{t_i},y_{t_{i+1}};\theta),\ t_i\in{\mathcal D} \},
\end{equation}
which can be viewed as a transformation of the observed random variable in terms of the increments of the driving noise. Note that ${y}_{{\mathcal D}}$ and $\{\Delta X_{t_i}, t_i\in{\mathcal D}\}$ fully parametrize processes $Y^{\mathcal D}$ and $X^{\mathcal D}$. Thus, we can write the likelihood of observing $y_{{\mathcal D}}$ as 
\begin{equation*}
\label{likelihood 1}
L_{Y^{\mathcal D}}\left(y_{{\mathcal D}}|\theta\right) = L_{\Delta X_{\mathcal D}}\left(  I_{\theta,{\mathcal D}}^{-1} (y_{{\mathcal D}})\right) |D I_{\theta,{\mathcal D}}^{-1} (y_{{\mathcal D}})|,
\end{equation*}
where by $L_{\Delta X_{\mathcal D}}(\Delta x_{{\mathcal D}})$ we denote the Radon-Nikodym derivative of ${\mathbb P}_{\Delta X_{\mathcal D}}$ with respect to Lebesque. This will be explicitly known since we assumed that we know the distribution of $X^{\mathcal D}$. Finally, since $\Delta X_{t_i}$ only depends on $y_{t_i}$ and $y_{t_{i+1}}$ and not the whole path, it is not hard to see that the Jacobian matrix will be block lower triangular and consequently, the determinant will be the product of the determinants of the blocks on the diagonal:
\begin{equation}
\label{Jacobian}
 |D I_{\theta,{\mathcal D}}^{-1} (y_{{\mathcal D}})| = \prod_{t_i \in{\mathcal D}} \left| \triangledown \Delta X_{t_i}(y_{t_i},y;\theta)|_{y = y_{t_{i+1}}} \right|.
\end{equation}
Note that, by definition,
\[ F_{t_{i+1}-t_i}(y_{t_i},\Delta X_{t_i}(y_{t_i},y;\theta);\theta)\equiv y.
\]
Thus,
\[ D_c F_{t_{i+1}-t_i}(y_{t_i},c;\theta)|_{c = \Delta X_{t_i}(y_{t_i},y_{t_{i+1}};\theta)}\cdot \triangledown\Delta X_{t_i}(y_{t_i},y;\theta)|_{y = y_{t_{i+1}}}\equiv I_d
\]
and, consequently,
\begin{eqnarray*}
\triangledown\Delta X_{t_i}(y_{t_i},y;\theta)|_{y = y_{t_{i+1}}} &=& \left( D_c F_{t_{i+1}-t_i}(y_{t_i},c;\theta)|_{c = \Delta X_{t_i}(y_{t_i},y_{t_{i+1}};\theta)} \right)^{-1} = \\
&=& \left( Z_{t_{i+1}-t_i}(\Delta X_{t_i}(y_{t_i},y_{t_{i+1}};\theta)) \right)^{-1}.
\end{eqnarray*}
So, the likelihood can be written as
\begin{equation}
\label{likelihood 1}
L_{Y^{\mathcal D}}\left(y_{{\mathcal D}}|\theta\right) = L_{\Delta X_{\mathcal D}}\left(  I_{\theta,{\mathcal D}}^{-1} (y_{{\mathcal D}})\right)\left( \prod_{t_i\in{\mathcal D}} \left| Z_{t_{i+1}-t_i}\left(  I_{\theta,{\mathcal D}}^{-1} (y_{{\mathcal D}})_{t_i}\right)\right|\right)^{-1}.
\end{equation}

\subsection{Case II: Degrees of Freedom}

Now suppose that $m>d$. Without loss of generality, let us assume that given coordinates $c_{d+1},\dots,c_m$, the remaining coordinates $c_1,\dots,c_d$ are uniquely defined. Similar to previous case, we denote by $I^{-1}_{\theta,{\mathcal D},c_{d+1},\dots,c_m} (y_{{\mathcal D}})$ the map from data points ${y}_{{\mathcal D}}$ to the first $d$ increments, denoted by $\{\Delta X_{t_i}(y_{t_i},y_{t_{i+1}};\theta,c_{d+1},\dots,c_m)^i, t_i\in{\mathcal D},i=1,\dots,d\}$, for fixed $c_{d+1},\dots,c_m$. As before, this can be viewed as a transformation of the observed random variable in terms of the first $d$ increments of the driving noise and we get a similar formula for the likelihood:
\begin{eqnarray*}
L_{Y^{\mathcal D}}\left(y_{{\mathcal D}}|\theta,c_{d+1},\dots,c_m\right) = &\\
L_{\Delta X_{\mathcal D}}\left( I^{-1}_{\theta,{\mathcal D},c_{d+1},\dots,c_m} (y_{{\mathcal D}})\right)\cdot &
\left( \prod_{t_i\in{\mathcal D}} \left| Z_{t_{i+1}-t_i}\left( I^{-1}_{\theta,{\mathcal D},c_{d+1},\dots,c_m} (y_{{\mathcal D}})_{t_i}\right)\right|\right)^{-1}.
\end{eqnarray*}
However, $c_{d+1},\dots,c_m$ will not be known in general, so we have to consider all possible values of them, leading to the formula
\begin{eqnarray*}
L_{Y^{\mathcal D}}\left(y_{{\mathcal D}}|\theta\right) = \int_{\R^{m-d}} L_{Y^{\mathcal D}}\left(y_{{\mathcal D}}|\theta,x_{d+1},\dots,x_m\right) {\mathbb P}_{c_{d+1},\dots,c_{m}}\left( dx_{d+1},\dots,dx_{m}\right) =\\
 \int_{\R^{m-d}} L_{\Delta X_{\mathcal D}}\left( I^{-1}_{\theta,{\mathcal D},x_{d+1},\dots,x_m} (y_{{\mathcal D}})\right)\cdot 
\left( \prod_{t_i\in{\mathcal D}} \left| Z_{t_{i+1}-t_i}\left( I^{-1}_{\theta,{\mathcal D},x_{d+1},\dots,x_m} (y_{{\mathcal D}})_{t_i}\right)\right|\right)^{-1} \cdot \\
\cdot {\mathbb P}_{c_{d+1},\dots,c_{m}}\left( dx_{d+1},\dots,dx_{m}\right),
\end{eqnarray*}
where ${\mathbb P}_{c_{d+1},\dots,c_{m}}$ is the marginal distribution of ${\mathbb P}_{\Delta X_{\mathcal D}}$ on $\R^{(m-d)\times N}$. 


\section{The Limiting Case}
\label{section: limit}

In the first part of the paper, we assumed that we observe the response to a differential equation driven by a piecewise linear path \eqref{eq:main} and we constructed the exact likelihood of the observations. In this second part of the paper, we lift the assumption that the driver $X$ is a piecewise linear path; instead, we assume that we discretely observe the response to a differential equation \eqref{eq:main2} driven by a $p$-rough path $X$. We aim to construct an approximate likelihood for the observations and we study the behaviour of the approximate likelihood when the sampling size $\delta\to 0$. A crucial assumption is that there exists a sequence of partitions ${\mathcal D}(n)$ (usually dyadic) such that the corresponding piecewise linear interpolations $\pi_n(X)$ of the path $X$ converge in $p$-variation to the $p$-rough path $X$. This allows us to replace \eqref{eq:main2} by \eqref{eq:main}. 

Let us denote by $y_{\mathcal D(n)}$ the sequence of observations of the limiting equation \eqref{eq:main2} on the grid ${\mathcal D}(n)$. We will use the likelihood $L_{Y^{\mathcal D(n)}}$ constructed in \eqref{likelihood 1} to construct an approximate likelihood for the partially observed limiting equation -- for simplicity, we will now denote it by $L_{Y(n)}$. Also, to simplify the exposition, we will focus on the case where we have uniqueness, i.e. $m=d$ and $b$ is non-singular.

A first idea would be to define the approximate likelihood as $L_{Y(n)}\left( y_{\mathcal D(n)} | \theta \right)$. Then, we would hope to show that, for $n$ large, this will be close to $L_{Y(n)}\left( y(n)_{\mathcal D(n)} | \theta \right)$ in a way that allows the estimators constructed using this likelihood to inherit a lot of the properties of those constructed using exact likelihood $L_{Y(n)}\left( y(n)_{\mathcal D(n)} | \theta \right)$. Note that the difference between $y_{\mathcal D(n)}$ and $y(n)_{\mathcal D(n)}$ is that the first is the response to a realisation of the rough path $x$ while the latter is the response to the piecewise linear approximation of $x$ on the grid ${\mathcal D}(n)$, i.e. $y(n)_{{\mathcal D}(n)} = I_\theta(\pi_n(x))_{\mathcal D(n)}$, making the likelihood exact. Note that the two sequences converge in $p$-variation, for $n\to \infty$. So, we expect that the estimators constructed using the datasets $y_{\mathcal D(n)}$ and $y(n)_{\mathcal D(n)}$ will be close, provided that the estimator is continuous in the $p$-variation topology. 

However, when the model involves more than one parameter, it is often the case that, in the limit, $L_{Y(n)}\left( y_{\mathcal D(n)} | \theta \right)$ as a function of $\theta$ scales differently for different coordinates of $\theta$. In particular, this occurs because the drift component $dt$ scales differently than the `diffusion' component $dX_t$. Thus, if we use the likelihood to construct the Maximum Likelihood Estimators (MLEs), we need to carefully normalise the likelihood appropriately, depending on which coordinate of $\theta$ we want to estimate at any time. Actually, it is equivalent and more convenient to work with the log-likelihood: normalising the log-likelihood involves adding functions to the log-likelihood that are independent of the parameters we want to estimate. So, we want to construct an expansion of the log-likelihood of the form
\begin{equation}
\label{likelihood expansion}
\ell_{Y(n)} \left(y_{{\mathcal D}(n)}|\theta\right) = \sum_{k=0}^M \ell_{Y(n)}^{(k)} \left(y_{{\mathcal D}(n)}|\theta\right) N^{-\alpha_k} + R_M(y_{{\mathcal D}(n)},\theta)
\end{equation}
for $N = T/\delta=2^n T$, some $M\in\N$ and $-\infty<\alpha_0 <\alpha_1 <\dots <\alpha_M<\infty$, where $\ell_{Y(n)}^{(k)} \left(y_{{\mathcal D}(n)}|\theta\right)$ converge to a non-trivial limit (finite and non-zero) for every $k=0,\dots,M$ and the remainder $R_M(y_{{\mathcal D}(n)},\theta)$ satisfies 
\begin{equation}
\label{eq:remainder}
\lim_{N\to\infty}\sup_{\theta\in\Theta} N^{\alpha_M} R_M(y_{{\mathcal D}(n)},\theta)=0, {\rm a.s.}
\end{equation}
as $N\to\infty$. This will exist, assuming sufficient smoothness of the log-likelihood function of $\Delta x_{{\mathcal D}(n)}$, but will not necessarily be unique, as the lower orders can `hide' high order terms. To ensure that this is not the case, we make the following additional assumption.

\begin{assumption}
\label{convergence of scales}
Suppose that the log-likelihood function satisfies \eqref{likelihood expansion}. We will also assume that for almost every pair $(y,\tilde{y})$,
\begin{equation}
\label{convergence rate of pl approx}  
\lim_{n\to\infty} N(n)^{\alpha_M} | \ell_{Y(n)} \left(y_{{\mathcal D}(n)}|\theta\right) - \ell_{Y(n)} \left(\tilde y_{{\mathcal D}(n)}|\theta\right) |  = 0
\end{equation}
implies that
\begin{equation}
\label{scaled logL modulus of continuity}
\lim_{n\to\infty} | \ell^{(k)}_{Y(n)} \left(y_{{\mathcal D}(n)}|\theta\right) - \ell_{Y(n)}^{(k)} \left(\tilde y_{{\mathcal D}(n)}|\theta\right) | = 0,\ \forall k=0,\dots,M.
\end{equation}
\end{assumption}
Consider the case where $M=1$. Then \eqref{convergence rate of pl approx} and \eqref{eq:remainder} lead to 
\[ \left(\ell^{(0)}_{Y(n)} \left(y_{{\mathcal D}(n)}|\theta\right) - \ell^{(0)}_{Y(n)} \left(\tilde y_{{\mathcal D}(n)}|\theta\right)\right)N^{-\alpha_0} + \left(\ell^{(0)}_{Y(n)} \left(y_{{\mathcal D}(n)}|\theta\right) - \ell^{(0)}_{Y(n)} \left(\tilde y_{{\mathcal D}(n)}|\theta\right)\right)N^{-\alpha_1}\to 0
\]
as $n\to\infty$. From this, we can deduce that the $0^{\rm th}$ order goes to zero, i.e.
\[ \left(\ell^{(0)}_{Y(n)} \left(y_{{\mathcal D}(n)}|\theta\right) - \ell^{(0)}_{Y(n)} \left(\tilde y_{{\mathcal D}(n)}|\theta\right)\right)\to 0
\]
but not necessarily the $1^{\rm st}$ order. Instead, we get that
\[ \lim_{n\to\infty}\left(\ell^{(0)}_{Y(n)} \left(y_{{\mathcal D}(n)}|\theta\right) - \ell^{(0)}_{Y(n)} \left(\tilde y_{{\mathcal D}(n)}|\theta\right)\right)N^{\alpha_1-\alpha_0} = \lim_{n\to\infty} \left(\ell^{(0)}_{Y(n)} \left(y_{{\mathcal D}(n)}|\theta\right) - \ell^{(0)}_{Y(n)} \left(\tilde y_{{\mathcal D}(n)}|\theta\right)\right).\]

These limits, if non zero, will usually be random and will depend on $(y,\tilde{y})$. The probability of them being equal will be $0$, unless they actually converge to the same random variable. This will happen if the $0^{\rm th}$ order `hides' a $1^{\rm st}$ order component. Assumption \ref{convergence of scales} makes sure that this cannot happen. 

Now, let $\theta_i$ be an arbitrary coordinate of the parameter $\theta$ and suppose that $\ell_{Y(n)}^{(m)} \left(y_{{\mathcal D}(n)}|\theta\right)$ is the first component of the likelihood containing sufficient information for estimating $\theta_i$. Note that information content will also depend on the estimation method, as discussed below. 
Intuitively, we expect that the first $m-1$ components will be irrelevant to the estimation of the parameter and should be ignored, while the remaining log-likelihood should be normalised by $N^{\alpha_m}$. Thus, we will say that coordinate $\theta_i$ of the parameter is of order $m$ and, given observations of the limiting equation, we will use the dominating term $\ell_{Y(n)}^{(m)} \left(y_{{\mathcal D}(n)}|\theta\right)$ for its estimation. We will assume that all coordinates of the parameter are of finite order -- otherwise, they cannot be estimated! 


Below, we use this framework to build estimators for parameter $\theta$ using the constructed likelihoods. We discuss separately the two most common approaches, corresponding to the Frequentist or Bayesian paradigm. 

\subsection{Frequentist Setting}

In the frequentist setting, we use the likelihood constructed above in order to construct the MLE of the parameter $\theta\in\Theta$. Let us assume that the likelihood is a differentiable function of the parameter $\theta$. Below, we describe how to inductively define the MLEs of different co-ordinates of $\theta$.
\begin{itemize}
\item[1.]  We say that co-ordinates of the parameter $\theta$ are of order $\alpha_0$ and we denote them by $\theta(0)$ if 
\[ \nabla_{\theta(0)}\ell^{(0)}(y_{\mathcal D(n)} | \theta) \not\equiv \bar{0}.\]
Then, we define their estimate as 
\[ \hat{\theta}(0,y_{{\mathcal D}(n)}) = {\rm argmax}_{\theta(0)} \ell^{(0)}_{Y(n)}\left(y_{{\mathcal D}(n)}|\theta(0) \right).\]
\item[2.] Suppose that we have defined parameters of order up to $m$ and their MLEs, for some $m\geq 0$. Then, we define $\theta(m+1)$ as the set of all coordinates of $\theta$ that are not included in $\theta(0),\dots,\theta(m)$, that satisfy
\[ \nabla_{\theta(m+1)}\ell^{(m+1)}(y_{\mathcal D(n)} | \hat\theta(0,y_{{\mathcal D}(n)}),\dots,\hat\theta(m,y_{{\mathcal D}(n)}),c\theta(m))\not\equiv\bar{0},\]
where $c\theta(m)$ is the set of all coordinates that are not of order $\leq m$.
We define their MLE as
\[ \hat{\theta}_{m+1}(y_{{\mathcal D}(n)}) = {\rm argmax}_{\theta_{m+1}} \ell^{(m+1)}_{Y(n)}\left(y_{{\mathcal D}(n)}|\hat\theta_0(y_{{\mathcal D}(n)}),\dots,\hat\theta_m(y_{{\mathcal D}(n)}),\theta_{m+1} \right).\]
\end{itemize}

\subsection{Bayesian Setting}

In the Bayesian setting, we use the likelihood constructed above together with a prior distribution on the parameter space that we will denote by $u$, in order to construct the posterior distribution of the parameter $\theta\in\Theta$. In this setting, it is not necessary to separate the likelihood into different scalings. However, doing so can shed more light into the process, so below we describe the way that this can be done:
\begin{itemize}
\item[1.] First, we start with the lower order $\alpha_0$ and work our way up. We say that a co-ordinate $\theta_i$ of the parameter is of order $\alpha_k$ if the distance between the posterior and prior on $\theta_i$, for an appropriate choice of distance on the measure space, is non-zero for the first time when the posterior is computed using the scaling of the likelihood corresponding to $\ell^{(k)}_{Y(n)}\left(y_{{\mathcal D}(n)}|\theta_0 \right)$ for any other value of the parameter except for a set of measure zero with respect to the prior. We will denote by $\theta(k)$ all the co-ordinates of the parameter that are of order $k$ and by $r$ the maximum order. 
\item[2.] Starting with the lower order, we construct the posterior inductively. We define the posterior at level $k$, denoted by $u_k$ as the posterior computed using $u_{k-1}$ as the prior and
\[ \exp\left(\ell^{(k)}_{Y(n)}\left(y_{{\mathcal D}(n)}|\theta\right)N^{-\alpha_k}\right)\]
as the likelihood, for all parameters of order up to $k$ and $u_0 = u$.
\end{itemize}


\section{Example: The 1d fractional O.U. process}
\label{section: example}

To demonstrate the methodology, we will apply the ideas described in the previous section to a simple example. We consider the differential equation
\begin{equation}
\label{1d fOU}
dY^{\mathcal D}_t = -\lambda Y^{\mathcal D}_t dt + \sigma X^{\mathcal D}_t,\ \ Y^{\mathcal D}_0 = 0,
\end{equation}
where $X^{\mathcal D}_t$ is the piecewise linear interpolation to a fractional Brownian path with Hurst parameter $h$ on a homogeneous grid ${\mathcal D} = \{ k\delta;
 k=0,\dots,N\}$ where $N\delta = T$. Our goal will be to construct the likelihood of discretely observing a realisation of the solution $Y^{\mathcal D}(\omega)$ on the grid, for parameter values $\theta = (\lambda,\sigma)\in\R_+\times\R_+$. 
 
 Our first task is to explicitly construct the parametrization of $Y^{\mathcal D}(\omega)$ in terms of its values on the grid $y_{\mathcal D}$, that completely determine the process. Let $X^{\mathcal D}(\omega)$ be the piecewise linear interpolation on ${\mathcal D}$ of the corresponding realisation of a fractional Brownian path driving \eqref{1d fOU}. We will denote by $x_{t_i}$ its values on the grid, i.e. $X^{\mathcal D}(\omega)_{t_i} = x_{t_i},\ \forall t_i \in {\mathcal D}$. Since $X^{\mathcal D}(\omega)$ is the piecewise linear path defined on these points, $Y^{\mathcal D}(\omega)$ will be the solution to
\[ dY^{\mathcal D}(\omega)_t = -\lambda Y^{\mathcal D}(\omega)_t dt + \sigma \frac{x_{(k+1)\delta} - x_{k \delta}}{\delta} dt, \]
which is given by
\[ Y^{\mathcal D}(\omega)_t = Y^{\mathcal D}(\omega)_{k\delta} e^{-\lambda (t-k\delta)} + \frac{\sigma}{\lambda} \frac{x_{(k+1)\delta} - x_{k \delta}}{\delta}\left( 1- e^{-\lambda (t-k\delta)} \right), t\in [k\delta, (k+1)\delta).\]
We now need to solve for the unknown $\Delta x_{k+1} := x_{(k+1)\delta} - x_{k \delta}$: for $t = (k+1)\delta$. We get
\begin{equation}
\label{fOU:discreteI}
y_{(k+1)\delta} = y_{k\delta} e^{-\lambda \delta} + \frac{\sigma \Delta x_{k+1}}{\lambda\delta} \left( 1- e^{-\lambda \delta} \right)
\end{equation}
and, consequently,
\begin{equation}
\label{fOU:discreteInvI}
I^{-1}_{\theta,{\mathcal D}}(y_{{\mathcal D}})_{k+1}:=\Delta x_{k+1} = \frac{\lambda\delta\left( y_{(k+1)\delta} - y_{k\delta} e^{-\lambda \delta} \right)}{\sigma \left( 1-e^{-\lambda \delta} \right)},\ k=0,\dots,N-1,
\end{equation}
with $y_0 = 0$ and $\theta = (\lambda,\sigma)$. Thus, $Y^{\mathcal D}(\omega)$ is given by
\begin{equation}
\label{fOU:Y(n)}
Y^{\mathcal D}(\omega)_t = y_{k \delta} e^{-\lambda (t-k\delta)} + \frac{y_{(k+1)\delta} - y_{k\delta} e^{-\lambda \delta}}{1-e^{-\lambda \delta}} \left( 1- e^{-\lambda (t-k\delta)} \right),
\end{equation}
for $t \in [k\delta, (k+1)\delta)$ and $y_0 = 0$.  

Clearly, in this case, the solution of system \eqref{system} always exists and is unique under the condition that $\sigma\neq 0$. Let us now compute the process $Z$ defined in \eqref{Z definition}. In this case, since $d=1$, this is a scalar process. It is easy to compute $Z$ directly but we will use formula \eqref{Z solution} instead, as a demonstration. First, we note that $A$ defined in \eqref{def: A} will be $A(y) = \partial_y (-\lambda y + \sigma c) = -\lambda$. Thus, \eqref{Z solution} becomes
\[ Z_t = \int_0^t \exp(-\lambda(t-s))\frac{\sigma}{\delta} ds = \frac{\sigma}{\lambda\delta} (1-e^{-\lambda t}).\] 
We now have all the elements we need to write down the likelihood: from \eqref{likelihood 1}, we get
\begin{equation*}
L_{Y^{\mathcal D}}\left( y_{{\mathcal D}} |\ \theta\right) = L_{\Delta X_{\mathcal D}}\left( I_{\theta,{\mathcal D}}^{-1}(y_{{\mathcal D}})  \right)  \left( \frac{\lambda \delta}{\sigma(1-e^{-\lambda\delta})} \right)^N.
\end{equation*}
Finally, we note that the likelihood of the increments $\Delta X_{\mathcal D}$ is a mean zero Gaussian distribution with covariance matrix given by
\[ \left( \Sigma_h^{\mathcal D} \right)_{ij} = \frac{\delta^{2h}}{2}\left( |j-i+1|^{2h} + |j-i-1|^{2h} - 2|j-i|^{2h}\right),\ i,j = 1,\dots,N,\]
where $h$ is the Hurst parameter of the fractional Brownian motion. Thus, the likelihood becomes
\begin{equation}
\label{fOU:Lfinal}
 |2 \pi\Sigma_h^{\mathcal D}|^{-\frac{1}{2}}\exp\left( -\frac{1}{2}  I_{\theta,{\mathcal D}}^{-1}(y_{{\mathcal D}}) \left( \Sigma_h^{\mathcal D}\right)^{-1}  I_{\theta,{\mathcal D}}^{-1}(y_{{\mathcal D}})^* \right) \left( \frac{\lambda \delta}{\sigma(1-e^{-\lambda\delta})} \right)^N,
\end{equation}
where we denote by $z^*$ the transpose of a vector $z$. The corresponding log-likelihood is proportional to
\begin{equation}
\label{fOU logL1}
{\ell}_{Y(n)}\left( y_{{\mathcal D}} |\ \theta \right) \propto -\frac{1}{2}  I_{\theta,{\mathcal D}}^{-1}(y_{{\mathcal D}}) \left( \Sigma_h^{\mathcal D}\right)^{-1}  I_{\theta,{\mathcal D}}^{-1}(y_{{\mathcal D}})^* +N\log\left( \frac{\lambda\delta}{\sigma(1-e^{-\lambda\delta})} \right).
\end{equation}
Finally, we can replace $I_{\theta,{\mathcal D}}^{-1}$ above with its exact expression, which gives
\begin{equation}
\label{fOU logL}
{\ell}_{Y(n)}\left( y_{{\mathcal D}} |\ \lambda,\sigma \right) \propto -\frac{\lambda^2 \delta^2}{2\sigma^2(1-e^{-\lambda\delta})^2} \left( \Delta^\lambda y\right)_{{\mathcal D}} \left( \Sigma_h^{\mathcal D}\right)^{-1}  \left( \Delta^\lambda y\right)_{{\mathcal D}}^*+N\log\left( \frac{\lambda\delta}{\sigma(1-e^{-\lambda\delta})} \right),
\end{equation}
where by $\Delta^\lambda y_{k\delta} = y_{(k+1)\delta} - y_{k\delta}e^{-\lambda \delta}$. 

Now, let us construct the corresponding limiting likelihoods derived from \eqref{fOU logL}, for $h= \frac{1}{2}$, i.e. the diffusion case. Then, \eqref{fOU logL} becomes
\begin{eqnarray} 
\label{OU: logL}
\nonumber {\ell}_{Y(n)}\left( y_{{\mathcal D}(n)} |\ \lambda,\sigma \right) &=& -\frac{T}{2\delta}\log(2\pi)-\frac{\lambda^2 \delta}{2\sigma^2(1-e^{-\lambda\delta})^2} \left( \Delta^\lambda y\right)_{{\mathcal D}}  \left( \Delta^\lambda y\right)_{{\mathcal D}}^*\\
\nonumber &&+N\log\left( \frac{\lambda\delta}{\sigma(1-e^{-\lambda\delta})} \right) = \\
\nonumber &=& -\frac{T}{2\delta}\log(2\pi)-\frac{\lambda^2 \delta}{2\sigma^2(1-e^{-\lambda\delta})^2} \sum_{k=0}^{N-1}(y_{(k+1)\delta} - y_{k\delta}e^{-\lambda \delta})^2\\
\nonumber &&+N\log\left( \frac{\lambda\delta}{\sigma(1-e^{-\lambda\delta})} \right) = \\
 &=& -\frac{T}{2\delta}\log(2\pi)-\frac{T}{\delta}\log(\sigma) -\frac{T}{\delta}\log\left( \frac{1-e^{-\lambda\delta}}{\lambda\delta} \right)\\
\nonumber &&-\frac{\lambda^2 \delta}{2\sigma^2(1-e^{-\lambda\delta})^2} \sum_{k=0}^{N-1}(y_{(k+1)\delta} - y_{k\delta}e^{-\lambda \delta})^2.
\end{eqnarray}
Clearly, the first two terms of \eqref{OU: logL} is of order $o(\frac{1}{\delta})$. Noting that 
\[ \log\left( \frac{1-e^{-\lambda\delta}}{\lambda\delta} \right) = -\frac{\lambda}{2}\delta +{\mathcal O}(\delta^2), 
\]
it follows that the third term of \eqref{OU: logL} is of order ${\mathcal O}(1)$. In particular,
\[ \frac{T}{\delta}\log\left( \frac{1-e^{-\lambda\delta}}{\lambda\delta} \right) = -\frac{\lambda}{2}T +{\mathcal O}(\delta).
\]
Now, let us consider the final term in \eqref{OU: logL}. First, we note that
\[ \frac{\lambda^2 \delta}{2\sigma^2(1-e^{-\lambda\delta})^2} = \frac{1}{2\sigma^2 \delta} + \frac{\lambda}{2 \sigma^2} + {\mathcal O}(\delta).
\]
Then, we expand the sum as follows:
\begin{eqnarray*}
\sum_{k=0}^{N-1}(y_{(k+1)\delta} - y_{k\delta}e^{-\lambda \delta})^2 &=& \sum_{k=0}^{N-1}(y_{(k+1)\delta} - y_{k\delta})^2 + \sum_{k=0}^{N-1}y_{k\delta}^2(1-e^{-\lambda \delta})^2 \\
&&+ 2\sum_{k=0}^{N-1}y_{k\delta}(1-e^{-\lambda \delta})(y_{(k+1)\delta} - y_{k\delta}).
\end{eqnarray*}
The first term above is of order ${\mathcal O}(1)$, since it converges to the Quadratic Variation of the process, which is finite. The second term is of order ${\mathcal O}(\delta)$, with the dominating term being
\[ \sum_{k=0}^{N-1}y_{k\delta}^2(1-e^{-\lambda \delta})^2 = \lambda^2 \delta \sum_{k=0}^{N-1}y_{k\delta}^2\delta + {\mathcal O}(\delta^2),\]
since the latter sum converges to the corresponding integral. Finally, the last term of the sum expansion is also of order ${\mathcal O}(\delta)$, with the dominating term being
\[ 2\sum_{k=0}^{N-1}y_{k\delta}(1-e^{-\lambda \delta})(y_{(k+1)\delta} - y_{k\delta}) = 2 \lambda \delta \sum_{k=0}^{N-1}y_{k\delta}(y_{(k+1)\delta} - y_{k\delta})+{\mathcal O}(\delta^2),
\]
with the sum again converging to the corresponding It\^o integral. 
Putting everything together, we get that the final term of \eqref{OU: logL} will be
\[ \frac{1}{2 \sigma^2 \delta} \sum_{k=0}^{N-1}(y_{(k+1)\delta} - y_{k\delta})^2 + \frac{\lambda}{2 \sigma^2} \sum_{k=0}^{N-1}(y_{(k+1)\delta} - y_{k\delta})^2 + \frac{\lambda^2}{2\sigma^2} \sum_{k=0}^{N-1}y_{k\delta}^2\delta + \frac{\lambda}{\sigma^2}\sum_{k=0}^{N-1}y_{k\delta}(y_{(k+1)\delta} - y_{k\delta}) + {\mathcal O}(\delta).
\]
Finally, we get that the log-likelihood \eqref{OU: logL} can be expanded as
\begin{eqnarray}
\label{OU: logL exp}
\nonumber {\ell}_{Y}\left( y_{{\mathcal D}(n)} |\ \lambda,\sigma \right) &=& \frac{N}{T}\left( -\frac{T}{2}\log{2\pi\sigma^2}-\frac{1}{2 \sigma^2} \sum_{k=0}^{N-1}(y_{(k+1)\delta} - y_{k\delta})^2\right) + \\
\nonumber && \left(\frac{\lambda}{2}T-\frac{\lambda}{2 \sigma^2} \sum_{k=0}^{N-1}(y_{(k+1)\delta} - y_{k\delta})^2 \right)+ \\
&&\left(-\frac{\lambda^2}{2\sigma^2} \sum_{k=0}^{N-1}y_{k\delta}^2\delta - \frac{\lambda}{\sigma^2}\sum_{k=0}^{N-1}y_{k\delta}(y_{(k+1)\delta} - y_{k\delta}) 
\right) + {\mathcal O}(\frac{1}{N}),
\end{eqnarray}
where $N = \frac{T}{\delta}$. Thus, the normalised likelihoods are
\[ \ell^{(0)}\left( y_{{\mathcal D}(n)} |\ \lambda,\sigma \right) =   -\frac{1}{2}\log{2\pi\sigma^2}-\frac{1}{2 T \sigma^2} \sum_{k=0}^{N-1}(y_{(k+1)\delta} - y_{k\delta})^2
\] 
and
\[ \ell^{(1)}\left( y_{{\mathcal D}(n)} |\ \lambda,\sigma \right) =  \left(\frac{\lambda}{2}T-\frac{\lambda}{2 \sigma^2} \sum_{k=0}^{N-1}(y_{(k+1)\delta} - y_{k\delta})^2 \right)-\left(\frac{\lambda^2}{2\sigma^2} \sum_{k=0}^{N-1}y_{k\delta}^2\delta + \frac{\lambda}{\sigma^2}\sum_{k=0}^{N-1}y_{k\delta}(y_{(k+1)\delta} - y_{k\delta}) \right). 
\]
If we use these normalised likelihoods in the context of MLEs, clearly $\ell^{(0)}_{Y(n)}\left(y_{\mathcal{D}(n)}|\sigma,\lambda\right)$ depends only on $\sigma$, so parameter $\sigma$ is of order $0$ and maximisation leads to the estimate
\[ \hat{\sigma^2}(y_{\mathcal{D}(n)}) = \frac{1}{T}\sum_{k=0}^{N-1}(y_{(k+1)\delta} - y_{k\delta})^2.
\]
Parameter $\lambda$ will be of order 1 and can be estimated using $\ell^{(1)}_{Y(n)}\left(y_{\mathcal{D}(n)}|\hat{\sigma^2}(y_{\mathcal{D}(n)}),\lambda\right)$. This leads to the estimate
\[\hat{\lambda}(y_{\mathcal{D}(n)}) = -\frac{\sum_{k=0}^{N-1}y_{k\delta}(y_{(k+1)\delta} - y_{k\delta})}{\sum_{k=0}^{N-1}y_{k\delta}^2\delta}.
\]
It is reassuring to see both estimates are known to be consistent and similar (up to discretisation) with MLE estimates one gets using standard likelihood construction. Also, it is worth noting that $\ell^{(1)}_{Y(n)}\left(y_{\mathcal{D}(n)}|\hat{\sigma^2}(y_{\mathcal{D}(n)}),\lambda\right)$ in the limit coincides with the likelihood constructed using Girsanov, which of course also requires knowledge of the diffusion parameter $\sigma$ \cite{iacus2009simulation}.

\section{Convergence of Approximate Likelihood}
\label{section: convergence}

In this section, we study the behaviour of the approximate likelihoods constructed in section \ref{section: limit}. We will make the following assumptions
\begin{assumption}
\label{ass: distribution}
Let $X$ be the stochastic process driving \eqref{eq:main}, defined on the probability space $(\Omega,\mathcal{F},\mathbb{P})$.
\begin{itemize}
\item[(i)] We assume that the distribution of the increments $\Delta X_{\mathcal{D}(n)}$ is absolutely continuous with respect to Lebesgue $\forall n>0$ and we define the log-likelihood $\ell_{\Delta X_{\mathcal{D}(n)}}$ as the logarithm of the corresponding Radon-Nikodym derivative. 
\item[(ii)] We assume that 
\[ d_p(\pi_n(X),X)\to 0,\ {\rm as}\ n\to\infty, \mathbb{P}-a.s.,\]
where the lift of $X$ to a rough path is defined as the limit to the lifts of $\pi_n(X)$, which converge. 
\item[(iii)] Moreover, we assume that the log-likelihood satisfies 
\begin{equation}
\label{cont of log likelihood}
\left| \ell_{\Delta X_{{\mathcal D}(n)}} \left(\Delta x_{{\mathcal D}(n)}\right) - \ell_{\Delta X_{{\mathcal D}(n)}}  \left(\Delta \tilde x_{{\mathcal D}(n)}\right) \right|  \leq \phi(N(n)) \psi\left(d_p(x,\tilde{x})\right),
\end{equation}
where $\Delta x_{{\mathcal D}(n)}$ and $\Delta \tilde x_{{\mathcal D}(n)}$ denote increments on ${\mathcal D}(n)$ respectively of $p$-rough paths $x$ and $\tilde{x}$. Functions $\phi$ and $\psi$ are real-valued functions, with $\psi$ increasing and satisfying
\[ \psi(N(n)) d_p(\pi_n(X),X)^2 \to 0,\]
in the same sense as above.
\end{itemize}
\end{assumption}
\begin{assumption}
\label{ass: model}
\begin{itemize}
\item[(i)] We assume that $a(\cdot;\theta)$ and $b(\cdot;\theta)$ are both $\rm{Lip}(\gamma+1)$ uniformly in $\theta$, for some $\gamma>p$. This implies uniform in $\theta$ continuity of integration and the It\^o map.
\item[(ii)] We assume that $b$ is bounded away from $0$, i.e.
\begin{equation}
\label{b lower bound} 
\inf_{y,\theta}\left| |b(y;\theta)| \right|= \frac{1}{M_b}>0.
\end{equation}
\end{itemize}
\end{assumption}
The first assumption concerns the distribution of the driving process $X$ while the second one concerns the model. Under these assumptions, we will prove the following:
\begin{theorem}
\label{th: main}
Let $y$ be the response to a realisation $x$ of a $p$-rough path $X$ through \eqref{eq:main2} and $y(n)$ be the response to $\pi_n(x)$ through \eqref{eq:main}, where $\pi_n(x)$ is the piecewise linear interpolation of $x$ on grid ${\mathcal D}(n) = \{ k 2^{-n}T,\ k=0,\dots,N \}$ for $N= 2^n T$. Suppose that assumptions \ref{ass: distribution} and \ref{ass: model} are satisfied and let us define $\ell_{Y(n)} \left(\cdot|\theta\right)$ as the logarithm of \eqref{likelihood 1}.  Then,
\begin{equation}
\lim_{n\to\infty} \sup_{\theta}\left| \ell_{Y(n)} \left(y_{{\mathcal D}(n)}|\theta\right) - \ell_{Y(n)} \left(y(n)_{{\mathcal D}(n)}|\theta\right)\right| = 0. 
\end{equation}
\end{theorem}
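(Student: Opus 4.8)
The plan is to write the log-likelihood \eqref{likelihood 1} as the sum of a density term and a Jacobian term,
\[ \ell_{Y(n)}(y_{{\mathcal D}(n)}|\theta) = \ell_{\Delta X_{{\mathcal D}(n)}}\!\left(I^{-1}_{\theta,{\mathcal D}(n)}(y_{{\mathcal D}(n)})\right) - \sum_{t_i\in{\mathcal D}(n)} \log\left| Z_{t_{i+1}-t_i}\!\left(I^{-1}_{\theta,{\mathcal D}(n)}(y_{{\mathcal D}(n)})_{t_i}\right)\right|, \]
and to control the two resulting differences separately. The starting observation is that, since $y(n)=I_\theta(\pi_n(x))$ and the per-interval inverse map \eqref{system} recovers (by the uniqueness of Case I, with $m=d$ and $b$ non-singular) the normalised increments of any piecewise linear driver from its response, the quantity $I^{-1}_{\theta,{\mathcal D}(n)}(y(n)_{{\mathcal D}(n)})$ is exactly the set of normalised increments of $\pi_n(x)$. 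By contrast, $I^{-1}_{\theta,{\mathcal D}(n)}(y_{{\mathcal D}(n)})$ is the set of normalised increments of an auxiliary piecewise linear path $\tilde x(n)$ -- depending on $\theta$ and $n$ -- whose response agrees with $y$ at the grid points of ${\mathcal D}(n)$. The whole estimate thus reduces to comparing the two piecewise linear drivers $\tilde x(n)$ and $\pi_n(x)$, uniformly in $\theta$.

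Next I would establish the closeness of these two drivers through the following chain. Assumption \ref{ass: distribution}(ii) gives $d_p(\pi_n(x),x)\to 0$; the uniform-in-$\theta$ continuity of the It\^o map from Assumption \ref{ass: model}(i) then yields $d_p(y(n),y)=d_p(I_\theta(\pi_n(x)),I_\theta(x))\to 0$ uniformly in $\theta$, and in particular $\max_i |y(n)_{t_i}-y_{t_i}|\to 0$. To pass from the grid-point discrepancy to a discrepancy of increments, I would use that the per-interval solution map $y_{t_{i+1}}\mapsto \Delta X_{t_i}(y_{t_i},y_{t_{i+1}};\theta)$ has derivative $Z^{-1}_{t_{i+1}-t_i}$, as already computed in the derivation of \eqref{likelihood 1}; Lemma \ref{rank Z} together with the lower bound \eqref{b lower bound} keeps $Z$ bounded away from singularity, while the ${\rm Lip}(\gamma+1)$ regularity bounds it and its derivatives from above. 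This makes the per-interval inverse Lipschitz, uniformly in $\theta$ and in the grid, and hence controls $d_p(\tilde x(n),\pi_n(x))$ by $d_p(\pi_n(x),x)$.

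With this comparison in hand, the density term is controlled directly by Assumption \ref{ass: distribution}(iii): its difference is bounded by $\phi(N(n))\,\psi\!\left(d_p(\tilde x(n),\pi_n(x))\right)$, and the rate condition forces this to vanish uniformly in $\theta$. For the Jacobian term, I would Taylor-expand $Z_\delta(c)$ for small $\delta$: from \eqref{Z solution} its leading order is $\delta\,b(y_0;\theta)$, with the dependence on the increment $c$ entering only at higher order in $\delta$, so that $\log|Z_\delta(\cdot)|$ is Lipschitz in $c$ with a constant carrying an extra power of $\delta$. Summed over the $N=T/\delta$ intervals, this extra $\delta$ compensates the number of terms, so that the total Jacobian discrepancy is controlled by a bounded multiple of the aggregate increment discrepancy and tends to $0$. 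Taking the supremum over $\theta$ and combining the two bounds gives the claim.

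The main obstacle is the matching of rates in the density term: the prefactor $\phi(N(n))$ blows up as $n\to\infty$, so everything hinges on showing that $\psi\!\left(d_p(\tilde x(n),\pi_n(x))\right)$ decays fast enough to beat it. This requires a genuinely \emph{quantitative}, and uniform in $\theta$, comparison $d_p(\tilde x(n),\pi_n(x))\lesssim d_p(\pi_n(x),x)$, fed into the condition $\psi(N(n))\,d_p(\pi_n(X),X)^2\to 0$ of Assumption \ref{ass: distribution}(iii). A secondary delicate point is the $N$-fold sum in the Jacobian term, where the per-interval estimate must be genuinely $o(1/N)$; the short-time expansion of $Z_\delta$ is what is expected to deliver the required extra power of $\delta$.
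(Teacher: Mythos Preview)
Your decomposition into a density term and a Jacobian term is exactly the paper's, and your treatment of the Jacobian sum is close in spirit to the paper's Lemma~\ref{log error control} (the paper uses a mean-value representation $Z_{t_{i+1}-t_i}=\exp({\bf A})_{\zeta_i,t_{i+1}}\,b(F_{\zeta_i};\theta)\,(t_{i+1}-t_i)$ and splits the log, rather than Taylor-expanding $Z_\delta(c)$, but either route yields the same bound by $d_p\bigl(\tilde x(n),\pi_n(x)\bigr)$).

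The genuine gap is in your comparison of the two drivers. You propose to go through $d_p(y(n),y)\to 0$ and then invert interval-by-interval via the Lipschitz map with derivative $Z^{-1}$. This yields at best a \emph{linear} estimate $d_p(\tilde x(n),\pi_n(x))\lesssim d_p(\pi_n(x),x)$, which is precisely the ``main obstacle'' you flag: the rate condition in Assumption~\ref{ass: distribution}(iii) is stated with $d_p(\pi_n(X),X)^2$, so a linear bound does not close the density estimate. The paper's Lemma~\ref{lemma: uniform limit} obtains the needed \emph{quadratic} rate by a different comparison. It does \emph{not} compare $y(n)_{t_{i+1}}$ with $y_{t_{i+1}}$; instead it writes
\[
\Delta\tilde x_i-\Delta x_i \;=\; Z_\delta(\xi_i)^{-1}\Bigl(y_{t_{i+1}}-F_{\theta_0}(y_{t_i},\delta;\Delta x_i)\Bigr),
\]
where the second factor compares the true response $y_{t_{i+1}}$ with the ODE flow \emph{restarted from the same point} $y_{t_i}$ and driven by the increment $\Delta x_i$ of $\pi_n(x)$. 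Since both evolutions share the initial condition $y_{t_i}$ and the first-level increments of $x$ and $\pi_n(x)$ over $[t_i,t_{i+1}]$ coincide by construction, the first-order terms in the rough-path (signature) expansion cancel, and the discrepancy is governed by the second and higher iterated integrals, i.e.\ by $d_p(x,\pi_n(x))^2$. Your route, going via the globally propagated $y(n)$ (whose interval-initial values $y(n)_{t_i}$ differ from $y_{t_i}$), forfeits exactly this cancellation. Replacing your global $y$-vs-$y(n)$ comparison by this per-interval ``same start, matching first increment'' comparison is the missing idea; once you have it, the $Z^{-1}$ bound (from \eqref{b lower bound}) and the $p$-variation sum over ${\mathcal D}(n)$ give $d_p(\tilde x(n),\pi_n(x))\lesssim d_p(x,\pi_n(x))^2$, which feeds cleanly into Assumption~\ref{ass: distribution}(iii).
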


This theorem allows us to transfer any consistency properties of the estimators corresponding to data from model \eqref{eq:main}, where the likelihood is exact. Proving consistency requires specific knowledge of the distribution. In this paper, we build a general framework and we only concentrate on the error due to the approximation error in the construction of the likelihood. If we use the scaled likelihoods to construct MLEs, the following corollary is crucial for proving consistency:

\begin{corollary}
Suppose the the assumptions of theorem \eqref{th: main} are satisfied. Let us also assume that assumption \ref{convergence of scales} is satisfied. Then, the following holds for all scaled likelihoods $ \ell_{Y(n)}^{(k)} \left(y_{{\mathcal D}(n)}|\theta\right)$ with $\alpha_k\leq 0$:
\[ \lim_{n\to\infty} \sup_{\theta}\left| \ell_{Y(n)}^{(k)} \left(y_{{\mathcal D}(n)}|\theta\right) - \ell_{Y(n)}^{(k)} \left(y(n)_{{\mathcal D}(n)}|\theta\right)\right| = 0.  \]
\end{corollary}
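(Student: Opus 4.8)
The plan is to take logarithms in \eqref{likelihood 1} and estimate the two resulting pieces separately, uniformly in $\theta$. Writing $\delta = 2^{-n}T$,
\[
\ell_{Y(n)}\left(y_{{\mathcal D}(n)}|\theta\right) = \ell_{\Delta X_{{\mathcal D}(n)}}\left(I^{-1}_{\theta,{\mathcal D}}(y_{{\mathcal D}(n)})\right) - \sum_{t_i\in{\mathcal D}(n)} \log\left| Z_\delta\left(I^{-1}_{\theta,{\mathcal D}}(y_{{\mathcal D}(n)})_{t_i}\right)\right|,
\]
and likewise with $y(n)_{{\mathcal D}(n)}$ replacing $y_{{\mathcal D}(n)}$, so the difference in the statement splits into a \emph{density} part and a \emph{Jacobian} part. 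The starting observation is that, since $y(n)$ is by construction the piecewise linear response to $\pi_n(x)$, inverting the local solves recovers the genuine normalised increments, $I^{-1}_{\theta,{\mathcal D}}(y(n)_{{\mathcal D}(n)}) = \Delta x_{{\mathcal D}(n)}$, whereas $\Delta\tilde x_{{\mathcal D}(n)} := I^{-1}_{\theta,{\mathcal D}}(y_{{\mathcal D}(n)})$ are the increments of the fictitious piecewise linear path $\tilde x(n)$ whose piecewise linear response interpolates the observed data $y_{{\mathcal D}(n)}$.

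First I would quantify how close these two inputs are. By Assumption \ref{ass: model}(i) the It\^o map is uniformly continuous in $\theta$ in the $p$-variation topology, so $d_p(\pi_n(x),x)\to 0$ forces $\sup_\theta d_p(y(n),y)\to 0$, and in particular the grid values of $y(n)$ and $y$ become uniformly close. Inverting the terminal-value map \eqref{solving for c} by the implicit function theorem, whose derivative with respect to $c$ is $Z_\delta$ (computed in Section \ref{section: likelihood}), gives on each interval
\[
\Delta\tilde x_{t_i} - \Delta x_{t_i} = Z_\delta^{-1}\left[\Delta(y - y(n))_{t_i} + \left(I - D_{y_0}F_\delta\right)(y_{t_i}-y(n)_{t_i})\right],
\]
where $\Delta(y-y(n))_{t_i}$ is the increment of the difference path over $[t_i,t_{i+1}]$. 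Assumption \ref{ass: model}(ii) keeps $b$, hence $Z_\delta = \delta\, b(y_{t_i}) + O(\delta^2)$, nondegenerate, so $Z_\delta^{-1} = O(\delta^{-1})$ uniformly in $\theta$; thus $\tilde x(n)$ and $\pi_n(x)$ are close in $p$-variation at a rate governed by $d_p(y,y(n))$.

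For the density part I would apply Assumption \ref{ass: distribution}(iii) directly: the difference is bounded by $\phi(N(n))\,\psi\!\left(d_p(\tilde x(n),\pi_n(x))\right)$, and combining the previous step with the quantitative rate built into Assumption \ref{ass: distribution}(iii) forces this to vanish uniformly in $\theta$. The Jacobian part is the main obstacle, being a sum of $N=2^nT\to\infty$ increments of $\log|\det Z_\delta|$ each carrying a potentially divergent factor. Expanding $\log|\det Z_\delta(\Delta\tilde x_{t_i})| - \log|\det Z_\delta(\Delta x_{t_i})|$ to first order produces a trace against $Z_\delta^{-1}\,\partial_c Z_\delta$; since $Z_\delta$ is $c$-independent at leading order one has $\partial_c Z_\delta = O(\delta^2)$, which combines with $Z_\delta^{-1}=O(\delta^{-1})$ and the expression above to bound each summand by $O\!\left(|\Delta(y-y(n))_{t_i}|\right) + O(\delta)\,|y_{t_i}-y(n)_{t_i}|$. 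Summing, the second contribution is $O(T\,\|y-y(n)\|_\infty)\to 0$, while H\"older's inequality against the grid partition controls the first by $N^{1-1/p}\,\|y-y(n)\|_{p\text{-var}} \lesssim N^{1-1/p}\,d_p(y,y(n))$. The crux is then to show this product tends to zero: the number of terms grows polynomially in $N$ while $d_p(y,y(n))\lesssim d_p(\pi_n(x),x)\to 0$, and closing the argument requires precisely the decay rate postulated in Assumption \ref{ass: distribution}(iii) to beat that growth. Uniformity in $\theta$ throughout is inherited from the uniform $\mathrm{Lip}(\gamma+1)$ bounds of Assumption \ref{ass: model}(i) and the uniform lower bound on $b$ of Assumption \ref{ass: model}(ii), which render every implied constant independent of $\theta$.
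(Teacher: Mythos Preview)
Your proposal attacks the wrong target. What you have written is an attempt at Theorem~\ref{th: main} --- convergence of the \emph{full} log-likelihood $\ell_{Y(n)}$ --- not the Corollary, which is about the individual \emph{scaled} components $\ell_{Y(n)}^{(k)}$ appearing in the expansion~\eqref{likelihood expansion}. Nowhere in your argument do the objects $\ell^{(k)}_{Y(n)}$, the exponents $\alpha_k$, or Assumption~\ref{convergence of scales} appear, and passing from ``$\ell_{Y(n)}$ converges'' to ``each $\ell_{Y(n)}^{(k)}$ converges'' is precisely the content of the Corollary. The paper's route is short: Theorem~\ref{th: main} gives $\sup_\theta|\ell_{Y(n)}(y_{\mathcal D(n)}|\theta)-\ell_{Y(n)}(y(n)_{\mathcal D(n)}|\theta)|\to 0$; taking $M$ to be the largest index with $\alpha_M\le 0$ one has $N(n)^{\alpha_M}\le 1$, so the hypothesis~\eqref{convergence rate of pl approx} of Assumption~\ref{convergence of scales} is satisfied for the pair $(y,y(n))$, and the conclusion~\eqref{scaled logL modulus of continuity} is exactly the statement of the Corollary for all $k$ with $\alpha_k\le 0$. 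You need to add this step.

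Even as a proof of the Theorem, two points in your sketch are loose. First, the identity $I^{-1}_{\theta,\mathcal D}(y(n)_{\mathcal D(n)})=\Delta x_{\mathcal D(n)}$ holds only at the true parameter $\theta_0$, since $y(n)=I_{\theta_0}(\pi_n(x))$; for general $\theta$ the inverted increments are $\theta$-dependent, which is why the paper's Lemma~\ref{lemma: uniform limit} first reduces to $\theta_0$ via uniform-in-$\theta$ continuity of the It\^o map before identifying the inverse with $\pi_n(x)$. Second, your Jacobian estimate ends with a bound of order $N^{1-1/p}\,d_p(y,y(n))$ and an appeal to Assumption~\ref{ass: distribution}(iii) ``to beat that growth'', but Assumption~\ref{ass: distribution}(iii) controls only the density part $\ell_{\Delta X}$ and says nothing about the Jacobian sum; the paper instead bounds the Jacobian difference directly by a constant times $d_p(I^{-1}_{\theta,n}(y_{\mathcal D(n)}),I^{-1}_{\theta,n}(y(n)_{\mathcal D(n)}))$ (Lemma~\ref{log error control}), with no polynomial-in-$N$ loss, and then shows this $p$-variation distance tends to zero (Lemma~\ref{lemma: uniform limit}).
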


Before proving the theorem, let us set some notation. We define
\begin{equation}
\label{inverse Ito}
I_{\theta,n}^{-1}(y_{{\mathcal D}(n)}) = I_\theta^{-1}(Y(n,y_{{\mathcal D}(n)})),
\end{equation}
where, as before, $Y(n,y_{{\mathcal D}(n)})$ is the response to a piecewise linear path parametrised by its values on the grid, $y_{{\mathcal D}(n)}$. So, $I_\theta^{-1}(Y(n,y_{{\mathcal D}(n)}))$ will be exactly that piecewise linear path whose response, when driving the system matches the observations $y_{{\mathcal D}(n)}$. $I_{\theta,n}^{-1}(y(n)_{{\mathcal D}(n)})$ and $Y(n,y(n)_{{\mathcal D}(n)})$ corresponding to observations $y(n)_{{\mathcal D}(n)}$ are defined similarly.

To prove the theorem, we will need the following two lemmas:

\begin{lemma}
\label{log error control}
Let $Z_{t_{i+1}-t_i}$ and $I_{\theta,n}^{-1}$ be defined as in \eqref{Z solution} and \eqref{inverse Ito} respectively and suppose that assumption \ref{ass: model} is satisfied. Then
\begin{eqnarray*}
&\left|\sum_{t_i\in{\mathcal D}(n)}\log |Z_{t_{i+1}-t_i}\left(  I_{\theta,n}^{-1} (y_{{\mathcal D}(n)})_{t_i}\right)| - \sum_{t_i\in{\mathcal D}(n)}\log |Z_{t_{i+1}-t_i}\left(  I_{\theta,n}^{-1} ( y(n)_{{\mathcal D}(n)})_{t_i}\right)|\right| \leq \\
&C \cdot d_p ( I_{\theta,n}^{-1} (y_{{\mathcal D}(n)}),  I_{\theta,n}^{-1} (y(n)_{{\mathcal D}(n)}))
\end{eqnarray*}
for some $C \in \R_+$ depending on Lipschitz bounds on $a, b$, $M_b$ and $T$.
\end{lemma}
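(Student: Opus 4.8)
The plan is to reduce the statement to a term-by-term comparison over the grid and to exploit the fact that $c\mapsto \log\lvert Z_\delta(c)\rvert$ is Lipschitz with a constant proportional to the mesh $\delta=t_{i+1}-t_i=2^{-n}$, so that the summed per-interval differences reassemble into the increments of the difference of the two inverse It\^o images, which can then be measured by $d_p$. Writing $c^{(1)}_i=I_{\theta,n}^{-1}(y_{{\mathcal D}(n)})_{t_i}$ and $c^{(2)}_i=I_{\theta,n}^{-1}(y(n)_{{\mathcal D}(n)})_{t_i}$, and keeping in mind that $Z_\delta$ also depends on the initial point $y_0=y_{t_i}$ (respectively $y(n)_{t_i}$), suppressed in the lemma's notation, I would first bound the left-hand side by $\sum_{t_i}\lvert\,\log\lvert Z_\delta(y_{t_i},c^{(1)}_i)\rvert-\log\lvert Z_\delta(y(n)_{t_i},c^{(2)}_i)\rvert\,\rvert$ and estimate each summand through the mean value theorem applied to the smooth map $(y_0,c)\mapsto\log\lvert\det Z_\delta(y_0,c)\rvert$, where $\lvert\cdot\rvert$ denotes the determinant.

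Two quantitative inputs drive the estimate. First, from \eqref{Z solution}, using that $\exp(\mathbf A)_{s,\delta}=I+o(1)$ uniformly (the genuine increments $\delta c^{(j)}_i$ stay bounded and tend to $0$, so the operator $A$ of \eqref{Z equation} has controlled exponential), one gets $Z_\delta(y_0,c)=\delta\,b(y_0;\theta)+o(\delta)$; combined with assumption \ref{ass: model}(ii), i.e.\ \eqref{b lower bound}, this gives $\lVert Z_\delta^{-1}\rVert\le M_b\,\delta^{-1}(1+o(1))$ uniformly in $\theta$ and over the bounded region visited by the responses. Second, differentiating \eqref{Z solution} in $c$ and invoking the Duhamel structure of \eqref{Z equation} with the ${\rm Lip}(\gamma+1)$ bounds on $a,b$ yields $\lVert\partial_c Z_\delta\rVert=O(\delta^2)$. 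Since $\partial_c\log\lvert\det Z_\delta\rvert=\mathrm{tr}(Z_\delta^{-1}\partial_c Z_\delta)$ by Jacobi's formula, the two bounds combine to a Lipschitz constant of order $\delta$ in the $c$-argument, so that the $c$-part of each summand is at most $C\,\delta\,\lvert c^{(1)}_i-c^{(2)}_i\rvert$. Crucially, $\delta\,\lvert c^{(1)}_i-c^{(2)}_i\rvert$ is exactly the modulus of the increment of the difference path $I_{\theta,n}^{-1}(y_{{\mathcal D}(n)})-I_{\theta,n}^{-1}(y(n)_{{\mathcal D}(n)})$ across $[t_i,t_{i+1}]$, which links the estimate directly to the object appearing on the right-hand side.

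Summing over the grid, the $c$-contribution becomes the $1$-variation of the difference of the two driving paths evaluated on ${\mathcal D}(n)$, which I would control by $C\,d_p$ via superadditivity of the $p$-variation control and the uniform stability of the It\^o map from assumption \ref{ass: model}(i); the remaining initial-point contribution is treated analogously after re-expressing $y_{t_i}-y(n)_{t_i}$ through the same pair of driving paths by continuity of $I_\theta$, which is where the dependence on $T$ in the constant $C$ enters. The main obstacle is precisely this final packaging step. The per-interval analysis naturally produces an $\ell^1$-type sum of increment moduli, whereas $d_p$ is an $\ell^p$-type quantity, so reconciling the two requires using superadditivity together with the boundedness of $\delta\,c^{(j)}_i$; and, most delicately, one must show that the initial-point term, which enters through $\log\lvert\det b(y_0)\rvert$ at order $O(1)$ rather than $O(\delta)$, does not accumulate uncontrollably over the $N=2^n T$ intervals. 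Handling this term uniformly, by routing the differences $y_{t_i}-y(n)_{t_i}$ back through the driving-path increments rather than bounding them pointwise, is the step I expect to demand the greatest care and to consume the regularity of $\log\lvert\det b\rvert$, the lower bound $M_b$, and the It\^o-map stability in an essential way.
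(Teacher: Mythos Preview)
Your approach is correct in spirit but differs from the paper's in its decomposition. Rather than differentiating $\log\lvert\det Z_\delta\rvert$ via Jacobi's formula and bounding $\partial_c Z_\delta$ and $Z_\delta^{-1}$ separately, the paper exploits the integral representation \eqref{Z solution} directly: by a mean-value-for-integrals argument it factors $Z_{t_{i+1}-t_i}$ multiplicatively as $\exp(\mathbf A)_{\zeta_i,t_{i+1}}\cdot b(F_{\zeta_i};\theta)\cdot(t_{i+1}-t_i)$ for some intermediate $\zeta_i$. Upon taking the log of the ratio, the factors $(t_{i+1}-t_i)$ cancel between the two data sets, and what remains splits additively into a $b$-piece and an $\exp(\mathbf A)$-piece. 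Each piece is then handled by the elementary inequality $\lvert\log(a/a')\rvert\le\log(1+M_b\lvert a-a'\rvert)\le M_b\lvert a-a'\rvert$ (using \eqref{b lower bound} for the $b$-piece, invertibility of $\exp(\mathbf A)$ for the other), followed by Lipschitz continuity and an appeal to the universal limit theorem to dominate the resulting sum by $d_p$ of the two inverse It\^o images.

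The paper's factorisation is cleaner in that it separates the order-$1$ contribution $\log\lvert b\rvert$ from the order-$\delta$ contribution $\log\lvert\exp(\mathbf A)\rvert$ without any derivative calculus, and the cancellation of the $\delta$-scale is automatic. Your Jacobi route is more explicit quantitatively but requires the $O(\delta^2)$ estimate on $\partial_c Z_\delta$ to achieve the same cancellation. Notably, both approaches arrive at the obstacle you correctly flag: the summation over the grid is naturally an $\ell^1$-type quantity in the per-interval differences, whereas $d_p$ is $\ell^p$. The paper resolves this step tersely, invoking the universal limit theorem (citing \cite{Baudoinrough}) to absorb both the initial-point and $c$-dependence into $d_p$ of the driving paths in one stroke; so your instinct to route $y_{t_i}-y(n)_{t_i}$ back through the drivers is the right one, and the paper confirms that this is precisely where the uniform-in-$\theta$ continuity of the It\^o map from assumption \ref{ass: model}(i) enters, though it does not spell out the packaging in more detail than you do.
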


\begin{proof}
We write
\begin{eqnarray}
\label{log error}
&\sum_{t_i\in{\mathcal D}(n)}\log |Z_{t_{i+1}-t_i}\left(  I_{\theta,n}^{-1} (y_{{\mathcal D}(n)})_{t_i}\right)| - \sum_{t_i\in{\mathcal D}(n)}\log |Z_{t_{i+1}-t_i}\left(  I_{\theta,n}^{-1} ( y(n)_{{\mathcal D}(n)})_{t_i}\right)| = \\
&\nonumber = \sum_{t_i\in{\mathcal D}(n)}\log \frac{|Z_{t_{i+1}-t_i}\left(  I_{\theta,n}^{-1} (y_{{\mathcal D}(n)})_{t_i}\right)| }{ |Z_{t_{i+1}-t_i}\left(  I_{\theta,n}^{-1} (y(n)_{{\mathcal D}(n)})_{t_i}\right)| }
\end{eqnarray}
As before, using the continuity of integrated function within $Z_{t_{i+1}-t_i}$ with respect to the time variable, we write
\begin{equation*}
Z_{t_{i+1}-t_i}\left(  I_{\theta,n}^{-1} (y_{{\mathcal D}(n)})_{t_i}\right) = \exp\left( {\bf A}(y_{t_i},I_{\theta,n}^{-1} (y_{{\mathcal D}(n)})_{t_i};\theta)\right)_{\zeta_i,t_{i+1}} \cdot b(F_{\zeta_i};\theta)  \cdot (t_{i+1}-t_i)
\end{equation*}
and
\begin{equation*}
Z_{t_{i+1}-t_i}\left(  I_{\theta,n}^{-1} (y(n)_{{\mathcal D}(n)})_{t_i}\right) = \exp\left( {\bf A}(y(n)_{t_i},I_{\theta,n}^{-1} (y(n)_{{\mathcal D}(n)})_{t_i};\theta)\right)_{\eta_i,t_{i+1}} \cdot b(F_{\eta_i};\theta)  \cdot (t_{i+1}-t_i)
\end{equation*}
for some $\zeta_i,\eta_i\in[t_i,t_{i+1}]$.  So, \eqref{log error} simplifies to
\begin{eqnarray*}
&\sum_{t_i\in{\mathcal D}(n)}\log \frac{|\exp\left( {\bf A}(y_{t_i},I_{\theta,n}^{-1} (y_{{\mathcal D}(n)})_{t_i};\theta)\right)_{\zeta_i,t_{i+1}} \cdot b(F(y_{t_i},I_{\theta,n}^{-1} (y_{{\mathcal D}(n)})_{t_i})_{\zeta_i};\theta)  | }{ |\exp\left( {\bf A}(y(n)_{t_i},I_{\theta,n}^{-1} (y(n)_{{\mathcal D}(n)})_{t_i};\theta)\right)_{\eta_i,t_{i+1}} \cdot b(F(y(n)_{t_i},I_{\theta,n}^{-1} (y(n)_{{\mathcal D}(n)})_{t_i})_{\eta_i};\theta) | } = \\
&\sum_{t_i\in{\mathcal D}(n)}\log \frac{|\exp\left( {\bf A}(y_{t_i},I_{\theta,n}^{-1} (y_{{\mathcal D}(n)})_{t_i};\theta)\right)_{\zeta_i,t_{i+1}}  | }{ |\exp\left( {\bf A}(y(n)_{t_i},I_{\theta,n}^{-1} (y(n)_{{\mathcal D}(n)})_{t_i};\theta)\right)_{\eta_i,t_{i+1}}| }  + \sum_{t_i\in{\mathcal D}(n)}\log \frac{|b(F(y_{t_i},I_{\theta,n}^{-1} (y_{{\mathcal D}(n)})_{t_i})_{\zeta_i};\theta)  | }{ | b(F( y(n)_{t_i},I_{\theta,n}^{-1} (y(n)_{{\mathcal D}(n)})_{t_i})_{\eta_i};\theta) | }.
\end{eqnarray*}
Focusing on the second summand, we write
\begin{eqnarray*}
&|\sum_{t_i\in{\mathcal D}(n)}\log \frac{|b(F(y_{t_i},I_{\theta,n}^{-1} (y_{{\mathcal D}(n)})_{t_i})_{\zeta_i};\theta)  | }{ | b(F(y(n)_{t_i},I_{\theta,n}^{-1} (y(n)_{{\mathcal D}(n)})_{t_i})_{\eta_i};\theta) | }| \leq \\ 
&\sum_{t_i\in{\mathcal D}(n)} \log\left( 1 + M_b\left| |b(F(y_{t_i},I_{\theta,n}^{-1} (y_{{\mathcal D}(n)})_{t_i})_{\zeta_i};\theta)|-|b(F(y(n)_{t_i},I_{\theta,n}^{-1} (y(n)_{{\mathcal D}(n)})_{t_i})_{\eta_i};\theta)|\right|\right) \leq \\ 
&M_b \cdot \sum_{t_i\in{\mathcal D}(n)} \left| |b(F(y_{t_i},I_{\theta,n}^{-1} (y_{{\mathcal D}(n)})_{t_i})_{\zeta_i};\theta)|-|b(F(y(n)_{t_i},I_{\theta,n}^{-1} (y(n)_{{\mathcal D}(n)})_{t_i})_{\eta_i};\theta)|\right| \leq \\
&M_bC_b  d_p ( I_{\theta,n}^{-1} (y_{{\mathcal D}(n)}),  I_{\theta,n}^{-1} (y(n)_{{\mathcal D}(n)})),
\end{eqnarray*}
where we used the inequality $\log(1+x)<x$, assumption that $\inf_y\left| |b(y)| \right|= \frac{1}{M_b}>0$, the Lipschitz continuity of $b$ and the universal limit theorem (see \cite{Baudoinrough} for exact bound). Similarly, we get that 
\begin{equation*}
\sum_{t_i\in{\mathcal D}(n)}\log \frac{|\exp\left( {\bf A}(y_{t_i},I_{\theta,n}^{-1} (y_{{\mathcal D}(n)})_{t_i};\theta)\right)_{\zeta_i,t_{i+1}}  | }{ |\exp\left( {\bf A}(y(n)_{t_i},I_{\theta,n}^{-1} ( y(n)_{{\mathcal D}(n)})_{t_i};\theta)\right)_{\eta_i,t_{i+1}}| }  \leq M_A C_A d_p ( I_{\theta,n}^{-1} (y_{{\mathcal D}(n)}),  I_{\theta,n}^{-1} (y(n)_{{\mathcal D}(n)})).
\end{equation*}
noting that $inf_y |exp(A)(y;\theta)_{t_i,t_{i+1}}| >0$ is always true. Putting the two together, we get the result.

\end{proof}

\begin{lemma}
\label{lemma: uniform limit}
Let $I_\theta^{-1}$ be the inverse It\^o map defined by system \eqref{eq:main}, satisfying assumptions \ref{ass: model}. We denote by $ Y(n,I_{\theta_0}(x)_{{\mathcal D}(n)})$ and $Y(n,I_{\theta_0}(\pi_n(x))_{{\mathcal D}(n)})$ the responses to the piecewise linear map as in \eqref{eq:main2}, parametrised by its values on the grid ${\mathcal D}(n)$, given by $I_{\theta_0}(x)_{{\mathcal D}(n)}$ and $I_{\theta_0}(\pi_n(x))_{{\mathcal D}(n)}$ respectively, where $x$ is a fixed rough path in $G\Omega_p(\R^d)$ and $\theta_0\in\Theta$. Then, 
\begin{equation}
\label{inverse distance bound}
\lim_{n\to\infty}  \sup_\theta d_p\left( I_{\theta}^{-1} \left(Y(n,I_{\theta_0}(x)_{{\mathcal D}(n)})\right), I_{\theta}^{-1} \left(Y(n,I_{\theta_0}(\pi_n(x))_{{\mathcal D}(n)})\right)\right) = 0,
\end{equation}
provided that $d_p(\pi_n(x),x)\to 0$ as $n\to\infty$.
\end{lemma}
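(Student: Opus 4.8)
The plan is to establish the result in two stages: first reduce the statement about the inverse It\^o maps to the analogous statement about the responses $Y(n,\cdot)$, and then prove convergence of those responses by an interval-by-interval comparison. Write $Y^{(1)} = Y(n,I_{\theta_0}(x)_{{\mathcal D}(n)})$ and $Y^{(2)} = Y(n,I_{\theta_0}(\pi_n(x))_{{\mathcal D}(n)})$, so that the quantities in \eqref{inverse distance bound} are $I_\theta^{-1}(Y^{(1)})$ and $I_\theta^{-1}(Y^{(2)})$. Under Assumption~\ref{ass: model}(i) the forward It\^o map is locally Lipschitz in the $p$-variation topology uniformly in $\theta$, and the non-degeneracy \eqref{b lower bound} of $b$ transfers this to the inverse map: recovering the driver via $dX = b(Y)^{-1}(dY-a(Y)\,dt)$ is again an RDE with ${\rm Lip}(\gamma)$ coefficients uniformly in $\theta$, so the universal limit theorem \cite{Baudoinrough} applies. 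Since $x$ is fixed and $d_p(\pi_n(x),x)\to 0$, all drivers and responses involved have uniformly bounded $p$-variation, whence there is a constant $C$, independent of $n$ and $\theta$, with
\[
\sup_\theta d_p\left( I_\theta^{-1}(Y^{(1)}), I_\theta^{-1}(Y^{(2)})\right) \le C\,\sup_\theta d_p\left(Y^{(1)}, Y^{(2)}\right).
\]
Applying the same uniform continuity to $I_{\theta_0}$ gives the preliminary input $d_p(y,y(n)) = d_p(I_{\theta_0}(x),I_{\theta_0}(\pi_n(x))) \le C\,d_p(\pi_n(x),x)\to 0$. It remains to show $\sup_\theta d_p(Y^{(1)},Y^{(2)})\to 0$.

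For this core estimate I would compare $Y^{(1)}$ and $Y^{(2)}$ on each interval $[t_i,t_{i+1}]$. Both are generated by the same $\theta$-flow $F$ but through different grid data, so the difference $D := Y^{(1)}-Y^{(2)}$ has grid increments $D_{t_{i+1}}-D_{t_i} = (y-y(n))_{t_i,t_{i+1}}$ exactly. The delicate point is the oscillation of $D$ \emph{inside} each interval. Solving \eqref{solving for c} for the two velocities gives a difference of order $\|Z_\delta^{-1}\|\sim\delta^{-1}$ times the data error, by \eqref{Z solution}; however this velocity only enters the path after integration against $b$ over a window of length $\delta$, so the two scalings cancel and the intra-interval oscillation of $D$ is controlled by the genuine increment $\omega_i := |(y-y(n))_{t_i,t_{i+1}}|$ rather than by the pointwise errors $e_i := |(y-y(n))_{t_i}|$. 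Summing the local contributions then yields
\[
\|D\|_{p\text{-var}}^p \le C\sum_i \omega_i^p \le C\,\|y-y(n)\|_{p\text{-var}}^p,
\]
which vanishes by the preliminary input, uniformly in $\theta$ because all constants depend only on the ${\rm Lip}$ bounds, on $M_b$ and on $T$; the level-$\ge 2$ parts of $d_p$ are handled by the analogous second-order local estimates furnished by the universal limit theorem.

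I expect the main obstacle to be exactly this intra-interval bookkeeping. The naive estimate, bounding the oscillation of $D$ on each interval by the pointwise errors $e_i$ and summing over the $N = 2^n T$ intervals, carries a spurious factor $N\sim\delta^{-1}$ and fails to vanish. The crux is to exploit the $p$-variation structure — replacing pointwise errors by increments of the difference path $y-y(n)$, whose $p$-variation is already known to tend to zero — while simultaneously tracking the $\delta^{-1}$ blow-up of $Z_\delta^{-1}$ against the length-$\delta$ integration window so that the scalings cancel, and verifying that every constant produced this way is independent of $\theta\in\Theta$.
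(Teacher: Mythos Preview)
Your route is genuinely different from the paper's. You reduce via uniform Lipschitz continuity of $I_\theta^{-1}$ to controlling $d_p(Y^{(1)},Y^{(2)})$ at the \emph{response} level for general $\theta$, and then try to bound this by $\|y-y(n)\|_{p\text{-var}}$, a first-order quantity. The paper instead works at the \emph{driver} level and specialises to $\theta=\theta_0$: there one of the two piecewise linear drivers is $\pi_n(x)$ itself, and the other, call it $\tilde x(n)$, satisfies $\Delta\tilde x_i-\Delta x_i = Z_\delta(\xi_i)^{-1}\bigl(y_{t_{i+1}}-F_{\theta_0}(y_{t_i},\delta;\Delta x_i)\bigr)$. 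The key observation you do not use is that $y_{t_{i+1}}$ and $F_{\theta_0}(y_{t_i},\delta;\Delta x_i)$ are the RDE responses to $x$ and $\pi_n(x)$ over $[t_i,t_{i+1}]$ with the \emph{same} initial condition $y_{t_i}$ and \emph{matching first-level increments}, so their difference is second order and controlled by $d_p(x,\pi_n(x))^2$. Working at the driver level also buys the paper the fact that both objects are piecewise linear on ${\mathcal D}(n)$, so the $p$-variation supremum is attained on a sub-partition of the grid and the passage from local increment bounds to the global $p$-variation norm is immediate.

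Your argument is plausible but has two soft spots you should address. First, the intra-interval oscillation of $D=Y^{(1)}-Y^{(2)}$ is not controlled purely by $\omega_i$: linearising $F_\delta$ in both the initial value and the velocity gives $Z_\delta(c^{(1)}-c^{(2)})\approx \omega_i - O(\delta)e_i$, so the oscillation picks up an $O(\delta e_i)$ contribution in addition to $O(\omega_i)$; this extra term is harmless ($\sum_i(\delta e_i)^p\le T\delta^{p-1}\|y-y(n)\|_\infty^p\to 0$) but should not be dropped silently. Second, and more seriously, your displayed bound $\|D\|_{p\text{-var}}^p\le C\sum_i\omega_i^p$ does not follow just from interval-wise oscillation control, because $Y^{(1)},Y^{(2)}$ are piecewise $C^1$ rather than piecewise linear and the $p$-variation supremum need not be attained on ${\mathcal D}(n)$. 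One clean fix is to move your estimate back to the driver level (where the paths \emph{are} piecewise linear and telescoping of the $\omega_i$ over coarser sub-partitions recovers exactly $\|y-y(n)\|_{p\text{-var}}$), which is essentially the paper's manoeuvre carried out for general $\theta$ rather than for $\theta_0$.
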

\begin{proof}
Under the assumption that $b$ is invertible, $I_\theta^{-1}$ can be expressed as an integral of the rough path $Y(n,\cdot)$. Thus, it is sufficient to show that
\begin{equation}
\label{distance 2}
\lim_{n\to\infty}  d_p\left( I_{\theta_0}^{-1} \left(Y(n,I_{\theta_0}(x)_{{\mathcal D}(n)})\right), I_{\theta_0}^{-1} \left(Y(n,I_{\theta_0}(\pi_n(x))_{{\mathcal D}(n)})\right)\right) = 0,
\end{equation}
as a consequence of the uniform in $\theta$ continuity of integration and the It\^o map in $p$-variation topology. By construction, these are piecewise linear paths. Clearly, $\pi_n(x) = I_{\theta_0}^{-1} \left(Y(n,I_{\theta_0}(\pi_n(x))_{{\mathcal D}(n)})\right)$ and $\pi_n(x)_{t_i,t_{i+1}} = \Delta x_{i}$. Let us denote by
 \[ \tilde{x}(n) = I_{\theta_0}^{-1} \left(Y(n,I_{\theta_0}(x)_{{\mathcal D}(n)})\right)
 \]
 and let $\tilde{x}(n)_{t_i,t_{i+1}} = \Delta \tilde{x}_{i}$ be the corresponding increments. Then, \eqref{distance 2} becomes equivalent to
 \begin{equation}
\lim_{n\to\infty}  \| \tilde{x}(n) - \pi_n(x) \|_{p-var} = 0,
\end{equation}
where $\|\cdot\|_{p-var}$ is the p-variation metric. The process $\tilde{x}(n) - \pi_n(x)$ is piecewise linear on ${\mathcal D}(n)$, as a difference of two piecewise linear functions on ${\mathcal D}(n)$. We know that in this case, the supremum of the $p$-variation norm is achieved at a subset of ${\mathcal D}(n)$ (see \cite{Driver13}), so
\begin{equation}
\label{piecewise difference} 
\| \tilde{x}(n) - \pi_n(x) \|_{p-var} = \left( \sup_{{\mathcal E}(n) \subset {\mathcal D}(n)} \sum_{\tau_i \in{\mathcal E}(n) \subset {\mathcal D}(n)} \| \tilde{x}(n)_{\tau_i,\tau_{i+1}} - {\pi_n(x)}_{\tau_i,\tau_{i+1}} \|^p \right)^\frac{1}{p}.
\end{equation}
Now, by definition, we have that 
\[ y_{t_{i+1}} = F_{\theta_0}(y_{t_i}, \delta; \Delta \tilde{x}_i),\]
where $F$ denotes the solution of the ODE \eqref{general ODE}, as before. By expanding $F_{\theta_0}$ around $\Delta x_i$, we get
\begin{equation*}
y_{t_{i+1}} = F_{\theta_0}(y_{t_i}, \delta; \Delta {x}_i) + D_c F_{\theta_0}(y_{t_i}, \delta; \xi_i) (\Delta \tilde{x}_i - \Delta x_i),
\end{equation*}
where $D_c F_{\theta_0}$ is the derivative of the solution to the ODE with respect to constant $c$. It follows that
\[ \Delta \tilde{x}_i - \Delta x_i = D_c F_{\theta_0}(y_{t_i}, \delta; \xi_i)^{-1}\left( y_{t_{i+1}} - F_{\theta_0}(y_{t_i}, \delta; \Delta {x}_i) \right).\]
We have already seen that $D_c F_{\theta_0}$ is given by the process $Z(c)$ defined in \eqref{Z definition}. So, the above relationship becomes
\[ \Delta \tilde{x}_i - \Delta x_i = Z(\xi_i)^{-1}\left( y_{t_{i+1}} - F_{\theta_0}(y_{t_i}, \delta; \Delta {x}_i) \right).\]
Moreover, it follows by \eqref{Z solution} that if $b$ is uniformly bounded away from zero, i.e. $\|b\|>\frac{1}{M}$, then so is $Z$ and consequently, $Z^{-1}$ is bounded from above. Thus, \eqref{piecewise difference} becomes
\begin{eqnarray*}
\| \tilde{x}(n) - \pi_n(x) \|_{p-var} \leq M \left( \sum_{\tau_i \in{\mathcal E}(n)} \| \sum_{\tau_i\leq t_i < \tau_{i+1}}\left(y_{t_{i+1}} - F_{\theta_0}(y_{t_i}, \delta; \Delta {x}_i)\right) \|^p \right)^\frac{1}{p}.
\end{eqnarray*}
Finally, we note that $y_{t_{i+1}}$ and  $F_{\theta_0}(y_{t_i}, \delta; \Delta {x}_i)$ are both the solution of \eqref{eq:main2} for the same initial conditions $y_{t_i}$, driven by $x$ and $\pi_n(x)$ respectively. Since the first iterated integral (increment) of these two drives is the same, if we expand $y_{t_{i+1}}$ and  $F_{\theta_0}(y_{t_i}, \delta; \Delta {x}_i)$ in terms of the iterated integrals of the driver and initial conditions, we find that their difference comes from the difference of the second and higher iterated integrals and thus it can be bounded by $d_p(x,\pi_n(x))^2$. 
\end{proof}

The two lemmas together prove the theorem.


\begin{thebibliography}{99}

\bibitem{Baudoinrough}
Baudoin, F. {\it Rough Path Theory}. Online notes available at {\tt http://www.math.purdue.edu/~fbaudoin/Rough.pdf}

\bibitem{beskos2015bayesian}
Beskos, A and Dureau, J and Kalogeropoulos, K. Bayesian inference for partially observed stochastic differential equations driven by fractional {B}rownian motion. {\it Biometrika}, 102(4): 809--827, 2015.

\bibitem{bishwal2008parameter}
Bishwal, JPN. {\it Parameter estimation in stochastic differential equations}, Springer, 2008.

\bibitem{brouste2013parameter}
Brouste, A and Iacus, SM. Parameter estimation for the discretely observed fractional {O}rnstein--{U}hlenbeck process and the {Y}uima {R} package. {\it Computational Statistics}, 28(4): 1529--1547, 2013.

\bibitem{chronopoulou2013inference}
Chronopoulou, A and Tindel, S. On inference for fractional differential equations. {\it Statistical Inference for Stochastic Processes}, 16(1): 29--61, 2013.

\bibitem{crisan2013robust}
Crisan, D and Diehl, J and Friz, PK and Oberhauser, H. Robust filtering: correlated noise and multidimensional observation. {\it Annals of Applied Probability}, 23(5): 2139--2160, 2013.

\bibitem{diehl2013pathwise}
Diehl, J and Friz, P and Mai, H. Pathwise stability of likelihood estimators for diffusions via rough paths. arXiv preprint {\tt arXiv:1311.1061}.

\bibitem{diehl2014robustness}
Diehl, J, Friz, PK, Mai, H, Oberhauser, H, Riedel, S and Stannat, W. Robustness in stochastic filtering and maximum likelihood estimation for {S}{D}{E}s. In {\it Extraction of Quantifiable Information from Complex Systems}, pp161--178, 2014.

\bibitem{diehl2013levy}
Diehl, J, Oberhauser, H and Riedel, S. A Levy-area between {B}rownian motion and rough paths with applications to robust non-linear filtering and {R}{P}{D}{E}s. arXiv preprint {\tt arXiv:1301.3799}.

\bibitem{Driver13}
Driver, B. {\it Rough Path Analysis}. Online notes available at {\tt http://www.math.ucsd.edu/~bdriver}.

\bibitem{friz2014course}
Friz, PK and Hairer, M. {\it A course on rough paths: with an introduction to regularity structures}, Springer, 2014.

\bibitem{friz2010multidimensional}
Friz, PK and Victoir, NB. {\it Multidimensional stochastic processes as rough paths: theory and applications}, CUP, 2010.


\bibitem{fuchs2013inference}
Fuchs, C. {\it Inference for Diffusion Processes: With Applications in Life Sciences}. Springer Science \& Business Media, 2013.

\bibitem{hairer2014theory}
Hairer, M. A theory of regularity structures. {\it Inventiones mathematicae}, 198(2): 269--504, 2014.

\bibitem{hu2010parameter}
Hu, Y and Nualart, D. Parameter estimation for fractional {O}rnstein--{U}hlenbeck processes. {\it Statistics \& Probability Letters}, 80(11): 1030--1038, 2010.

\bibitem{iacus2009simulation}
Iacus, SM. {\it Simulation and inference for stochastic differential equations: with R examples}, Springer, 2009.

\bibitem{kelley2001difference}
Kelley, W G and Peterson, A C. {\it The Theory of Differential Equations} (Second Edition), Springer, 2010

\bibitem{kolmogorov1940wienersche}
Kolmogorov, AN. Wienersche {S}piralen und einige andere interessante {K}urven im {H}ilbertschen {R}aum. {\it CR (Dokl.) Acad. Sci. URSS}, 26(2): 115--118, 1940.

\bibitem{kutoyants2013statistical}
Kutoyants, YA. {\it Statistical inference for ergodic diffusion processes}, Springer, 2013.

\bibitem{levin2013learning}
Levin, D and Lyons, T and Ni, H. Learning from the past, predicting the statistics for the future, learning an evolving system. arXiv preprint {\tt arXiv:1309.0260}.

\bibitem{lyons1994differential}
Lyons, T. Differential equations driven by rough signals ({I}): An extension of an inequality of {L}. {C}. {Y}oung. {\it Math. Res. Lett}, 4(1): 451--464, 1994.

\bibitem{lyons1998differential}
Lyons, T. Differential equations driven by rough signals. {\it Rev. Mat. Iberoamericana}, 14(2), 1998.

\bibitem{lyons2002system}
Lyons, T and Qian, Z. {\it System control and rough paths}, OUP, 2002.

\bibitem{lyons1995interpretation}
Lyons, T. The interpretation and solution of ordinary differential equations driven by rough signals. {\it Proc. Symposia in Pure Math}, 57: 115--128, 1995.

\bibitem{lysy2013statistical}
Lysy, M and Pillai, NS. Statistical Inference for Stochastic Differential Equations with Memory. arXiv preprint {\tt arXiv:1307.1164}

\bibitem{mandelbrot1968fractional}
Mandelbrot, BB and Van Ness, JW. Fractional {B}rownian motions, fractional noises and applications. {\it SIAM review}, 10(4): 422--437, 1968.

\bibitem{mishura2008stochastic}
Mishura, Y. {\it Stochastic calculus for fractional Brownian motion and related processes}. Springer, 2008.

\bibitem{neuenkirch2014least}
Neuenkirch, A and Tindel, S. A least square-type procedure for parameter estimation in stochastic differential equations with additive fractional noise. {\it Statistical Inference for Stochastic Processes}, 17(1):99--120, 2014.

\bibitem{papavasiliou2011parameter}
Papavasiliou, A and Ladroue, C. Parameter estimation for rough differential equations. {\it The Annals of Statistics}, 39(4): 2047--2073, 2011.

\bibitem{papavasiliou2009maximum}
Papavasiliou, A and Pavliotis, GA and Stuart, AM. Maximum likelihood drift estimation for multiscale diffusions. {\it Stochastic Processes and their Applications}, 119(10): 3173--3210, 2009.

\bibitem{rao2011statistical}
Rao, BLSP. {\it Statistical inference for fractional diffusion processes}, John Wiley \& Sons, 2011.

\bibitem{saussereau2014nonparametric}
Saussereau, B. Nonparametric inference for fractional diffusion. {\it Bernoulli}, 20(2): 878--918, 2014.

\bibitem{tudor2007statistical}
Tudor, CA and Viens, Frederi G. Statistical aspects of the fractional stochastic calculus. {\it Annals of Statistics}, 35(3): 1183--1212, 2007.

\bibitem{unterberger2010rough}
Unterberger, J. A rough path over multidimensional fractional {B}rownian motion with arbitrary {H}urst index by {F}ourier normal ordering. {\it Stochastic Processes and their Applications}, 120(8): 1444--1472, 2010.

\end{thebibliography}
\end{document}